\newtheorem{theorem}{Theorem}
\newtheorem{proposition}[theorem]{Proposition}
\newtheorem{lemma}[theorem]{Lemma}
\newtheorem{corollary}[theorem]{Corollary}
\newtheorem{claim}[theorem]{Claim} 
\theoremstyle{definition}
\newtheorem{definition}[theorem]{Definition}
\theoremstyle{remark}
\newtheorem*{remark}{Remark}
\newcommand{\DomB}{\mathbb{B}}
\newcommand{\Dom}{\mathbb{D}}
\newcommand{\APPROXEQ}{\unlhd}
\newcommand{\PET}{\ensuremath{\mathbf{PET}}\xspace}
\newcommand{\PETS}{\ensuremath{\mathbf{PETS}}\xspace}
\newcommand{\Ax}{\ensuremath{\mathrm{Ax}}\xspace}
\newcommand{\Seq}[1]{\langle {#1} \rangle}
\newcommand{\Finite}[1]{\tilde{{#1}}}
\newcommand{\D}{\mathcal{D}}
\newcommand{\Inst}{\mathrm{Inst}}
\newcommand{\InstR}{\overrightarrow{\Inst}}
\newcommand{\InstL}{\overleftarrow{\Inst}}
\newcommand{\IAx}{\mathrm{A}}
\newcommand{\ISup}{\mathrm{S}{\uparrow}}
\newcommand{\ISdwn}{\mathrm{S}{\downarrow}}
\DeclareMathOperator{\var}{\mathrm{Var}}
\DeclareMathOperator{\bvar}{\mathrm{BVar}}
\DeclareMathOperator{\dom}{\mathrm{dom}}
\DeclareMathOperator{\rng}{\mathrm{rng}}
\DeclareMathOperator{\ar}{\mathrm{ar}}
\newcommand{\cB}{\mathcal{B}}
\newcommand{\cD}{\mathcal{D}}
\newcommand{\cF}{\mathcal{F}}
\newcommand{\cT}{\mathcal{T}}
\newcommand{\cX}{\mathcal{X}}
\newcommand{\MANY}[1]{\overline{{#1}}}
\newcommand{\vs}{\MANY{s}}
\newcommand{\vt}{\MANY{t}}
\newcommand{\vu}{\MANY{u}}
\newcommand{\vv}{\MANY{v}}
\newcommand{\vw}{\MANY{w}}
\newcommand{\vx}{\MANY{x}}
\newcommand{\vy}{\MANY{y}}
\newcommand{\eps}{\epsilon}
\newcommand{\si}{\sigma}
\newcommand{\al}{\alpha}
\newcommand{\Stheory}[1]{\ensuremath{\mathrm{S}^{#1}_2}\xspace}
\newcommand{\Ttheory}[1]{\ensuremath{\mathrm{T}^{#1}_2}\xspace}
\newcommand{\Sot}{\Stheory{1}}
\newcommand{\Stt}{\Stheory{2}}
\newcommand{\sib}[1]{\ensuremath{\Sigma^{\mathrm{b}}_{#1}}\xspace}
\newcommand{\pib}[1]{\ensuremath{\Pi^{\mathrm{b}}_{#1}}\xspace}
\DeclareMathOperator{\Oh}{\mathrm{O}} 
\newcommand{\sma}{\mathbin{\#}}
\newcommand{\ind}[1]{\ensuremath{#1\mathrm{-IND}}\xspace}
\newcommand{\lind}[1]{\ensuremath{#1\mathrm{-LIND}}\xspace}
\newcommand{\sip}[1]{\ensuremath{\Sigma^{\mathrm{p}}_{#1}}\xspace}
\newcommand{\pip}[1]{\ensuremath{\Pi^{\mathrm{p}}_{#1}}\xspace}
\newcommand{\W}{\mathbf{w}}
\DeclareMathOperator{\lh}{\mathbf{l}} 
\DeclareMathOperator{\seqlh}{\mathbf{sql}} 
\DeclareMathOperator{\G}{\mathbf{g}} 
\DeclareMathOperator{\E}{\mathbf{e}} 
\newcommand{\apprx}{\sqsubseteq}
\newcommand{\compat}{\mathbin{\triangle}}
\newcommand{\forder}{\sqsubseteq}
\DeclareMathOperator{\maxapprx}{\mbox{$\max_\sqsubseteq$}}
\DeclareMathOperator{\suc}{\mathrm{s}}
\newcommand{\Basis}{\cB}
\newcommand{\eval}[1]{\llbracket {#1} \rrbracket}
\newcommand{\evalr}[2]{\eval{#1}_{#2,\rho}}
\newcommand{\evalfr}[1]{\eval{#1}_{F,\rho}}
\newcommand{\evalfrp}[1]{\eval{#1}_{F',\rho'}}
\newcommand{\update}[3]{#1\colon #2\mapsto #3}
\newcommand{\updfvw}{\update{f}{\vv}{w}}
\newcommand{\updcons}{\mathbin{\,*\,}}
\newcommand{\seqadd}{\mathbin{\,{:}\,}}
\newcommand{\seqcons}{\mathbin{\,{:}{:}\,}}
\newcommand{\Fupdate}[4]{#1\;*\;\update{#2}{#3}{#4}}
\newcommand{\FupdFfvw}{\Fupdate{F}{f}{\vv}{w}}
\newcommand{\Cons}{\mathrm{Cons}}
\title[On proving consistency of equational theories]{On proving consistency of equational theories in Bounded Arithmetic}
\author{Arnold Beckmann and Yoriyuki Yamagata}
\begin{document}

\maketitle

\begin{center}
  \today
\end{center}

\begin{abstract}
  We consider pure equational theories that allow substitution
  but disallow induction, which we denote as \PETS,
  based on recursive definition of their function symbols.
  We show that the Bounded Arithmetic theory \Sot proves the
  consistency of \PETS.
  Our approach employs models for \PETS based on approximate values
  resembling notions from domain theory in Bounded Arithmetic,
  which may be of independent interest. 
\end{abstract}

\section{Introduction}

The question whether the hierarchy of Bounded Arithmetic theories 
is strict or not, is an important open problem 
due to its connections to corresponding questions 
about levels of the Polynomial Time Hierarchy~\cite{Buss:1986}.
One obvious route to address this problem is to make use of 
G\"odel's 2nd Incompleteness Theorem, 
using statements expressing consistency for Bounded Arithmetic theories.
Early lines of research aimed to restrict the formulation 
of consistency suitably to achieve this aim \cites{Buss:1986, Pudlak:1990}.

One particular programme is to consider consistency statements 
based on equational theories.
Buss and Ignjatovi{\'c}~\cite{Buss:Ignjatovic:1995} have shown 
that the consistency of an induction free version 
of Cook's equational theory PV \cite{Cook:1975} is not provable in \Sot, 
where \Sot is the Bounded Arithmetic theory related to polynomial time reasoning.
Their version of induction free PV is formulated in a system that allows, 
in addition to equations, also inequalities between terms, and Boolean formulas.
Furthermore, a number of properties have been stated as axioms.

On the other hand, in a pure equational setting, where lines in derivations 
are equations between terms, axioms are restricted 
to recursive definition of function symbols, and induction is not allowed, 
consistency becomes provable in Bounded Arithmetic: 
The first author has shown in~\cite{Beckmann:2002} 
that the consistency of pure equational theories, 
in which substitution is not allowed, 
is provable in~\Sot\ -- in particular this result applies to 
Cook's PV \cite{Cook:1975} without substitution and induction.
The second author of this paper has improved on this result 
in~\cite{Yamagata:2018} by showing that the consistency of PV 
without induction but with substitution is provable in \Stt, 
the second level of the hierarchy of Bounded Arithmetic theories.

In this paper, we extend both our previous results \cites{Beckmann:2002, Yamagata:2018}.
With $\PETS(\Ax)$ we denote the pure equational theory 
with substitution but without induction, 
based on some nice set of axioms $\Ax$ --
Cook's original PV  \cite{Cook:1975} without induction but with substitution is one example of such a theory.
The main aim of this paper is to show that the consistency of $\PETS(\Ax)$ 
is provable in \Sot, thus improving on both \cites{Beckmann:2002, Yamagata:2018}.
To this end we employ a novel method of defining models in Bounded Arithmetic based 
on approximate values, which may be of independent interest.
Our approach resembles elements from domain theory, however we leave a full 
treatment of domain theory in Bounded Arithmetic to future research.

%
%

In the next section, we briefly introduce Bounded Arithmetic 
and fix some notions used throughout the paper.
In  Section~\ref{sec:EqTh} we define pure equational theories $\PETS(\Ax)$, 
which will be more general than PV without induction 
in that arbitrary recursive functions may be considered.
This is followed in Section~\ref{sec:Semantics} by an introduction 
of approximate values and semantics based on approximation, 
leading to feasible evaluations of terms based on such approximation semantics.
Section~\ref{sec:FrameModels} then defines models for equational theories 
based on approximation semantics, including a suitably restricted version 
which can be expressed as a bounded formula and used in induction arguments 
inside Bounded Arithmetic.  
A key notion will be a way of updating such models with further information 
about approximate values of functions, in a way that preserves the notion 
of being a model, provably in~\Sot.
In Section~\ref{sec:SoundnessInS22} we prove our first main theorem 
showing a form of soundness for $\PETS(\Ax)$ based on 
approximation semantics in \Stt -- an immediate consequence will be 
that \Stt proves the consistency of $\PETS(\Ax)$.
The final sections improve on this approach to obtain proofs in \Sot:
In Section~\ref{sec:instructions} we introduce instructions 
that allow to encode sequences of terms and operations on them, 
which are extracted from derivations in $\PETS(\Ax)$.
In this way we obtain an explicit way of describing model constructions 
related to $\PETS(\Ax)$ derivations, which allows us to reduce 
the bounded quantifier complexity of induction assertion in the proof 
of our second main theorem in Section~\ref{sec:SoundnessInS12} 
to show an improved soundness property for $\PETS(\Ax)$ provable in \Sot.

\section{Bounded Arithmetic}

%
%

\subsection{Language of Bounded Arithmetic}

We give a brief introduction to Bounded Arithmetic to support the
developments in this paper.
For more in depth discussions and results we refer the interested
reader to the literature~\cites{Buss:1986,Krajicek:1995}.
Theories of Bounded Arithmetic are first order theories of
arithmetic similar to Peano Arithmetic, their domain of discourse are
the non-negative integers.
For the purpose of this paper we can assume that the language of
Bounded Arithmetic 
contains a symbol for each polynomial time computable function,
including
$0$, $1$, $+$, $\cdot$, $|.|$, $\sma$,
representing zero, one, addition, multiplication,
the binary length function $|x|$ that computes the number of bits in a
binary representation of $x$ and can be defined by
$|x|=\lceil\log_2(x+1)\rceil$,
and smash $\sma$ computing $x\sma y=2^{|x|\cdot|y|}$.

\subsection{Theories of Bounded Arithmetic}

Theories of Bounded Arithmetic contain suitable defining axioms
for its function symbols.
The main differentiator are various forms of induction for
various classes of bounded formulas, which we will define next.

\emph{Bounded quantifiers} are defined as follows:
\begin{align*}
  (\forall x\le t)A &\quad\text{abbreviates}\quad (\forall x) (x\le t\rightarrow A) \\
  (\exists x\le t)A &\quad\text{abbreviates}\quad (\exists x) (x\le t\wedge A)
\end{align*}
If the bounding term $t$ of a bounded quantifier is of the form
$|t'|$, then the quantifier is called \emph{sharply bounded}.

Classes of bounded formulas $\sib i$ and $\pib i$ are defined in \cite{Buss:1986} 
by essentially counting alternations between existential and universal bounded quantifiers. 
Predicates defined by $\sib i$ and $\pib i$ formulas define
computational problems in corresponding classes of the Polynomial
Time Hierarchy of decision problems $\sip i$ and $\pip i$, respectively.
For example, those defined by $\sib1$ correspond exactly to NP.

In particular, the $\sib i$ and $\pib i$ classes include the following formulas:

\begin{itemize}
\item
  $\sib0=\pib0$ is the class of formulas build from atomic formulas and
  closed under Boolean connectives and sharply bounded quantification.

\item
  $\sib{i+1}$ includes all formulas of the form
  $(\exists x\le t)A$ with $A\in\pib i$.

\item
  $\pib{i+1}$ includes all formulas of the form
  $(\forall x\le t)A$ with $A\in\sib i$.
\end{itemize}

The theories $\Stheory{i}$, $i\ge0$, of Bounded Arithmetic have been defined 
in \cite{Buss:1986}, 
establishing a close relationship between fragments of Bounded Arithmetic
and levels of the Polynomial Time Hierarchy of functions.
More precisely, the $\sib{i+1}$-definable functions of
$\Stheory{i+1}$, that is the functions whose graph can be described by
a $\sib{i+1}$ formula, and whose totality is provable in
$\Stheory{i+1}$, form exactly the $i{+}1$-st level of
the Polynomial Time Hierarchy of functions,
$\mathrm{FP}^{\sip i}$.

Instead of defining the theory $\Stheory{i}$, we state some characteristic properties about induction provable in them.
%
Let \ind{\sib i} be the schema of induction for $\sib i$-properties,
consisting of formulas of the form
\[
  A(0) \wedge (\forall x)(A(x)\rightarrow A(x+1))
  \rightarrow (\forall x)A(x)
\]
for $A\in\sib i$.
The schema of \emph{logarithmic induction} \lind{\sib i} is then obtained by restricting the conclusion of induction to logarithmic values, that is
\[
  A(0) \wedge (\forall x)(A(x)\rightarrow A(x+1))
  \rightarrow (\forall x)A(|x|)
\]
for $A\in\sib i$.
%
%
%
We have the following:

\begin{theorem}[\cite{Buss:1986}]\label{thm:Pi-LIND}
  The instances of $\lind{\sib i}$ and $\lind{\pib i}$ are provable in $\Stheory{i}$.
\end{theorem}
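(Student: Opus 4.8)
I take $\Stheory{i}$ to be Buss's theory, axiomatized by BASIC together with the polynomial induction schema $\sib i$-PIND, namely
\[
  A(0)\wedge(\forall x)\bigl(A(\lfloor x/2\rfloor)\to A(x)\bigr)\to(\forall x)A(x)
\]
for $A\in\sib i$, and the plan is to derive both length induction schemes from this. \emph{First}, to obtain $\lind{\sib i}$, suppose a formula $D\in\sib i$ satisfies the LIND premises $D(0)$ and $(\forall x)(D(x)\to D(x+1))$. I would apply $\sib i$-PIND to the formula $A(y):=D(|y|)$, which is again $\sib i$ since substituting the term $|y|$ preserves the formula class. The base case $A(0)$ is just $D(0)$, and the PIND step $A(\lfloor y/2\rfloor)\to A(y)$ unwinds, using the BASIC-provable identity $|\lfloor y/2\rfloor|=|y|\mathbin{\dot{-}}1$, to $D(|y|\mathbin{\dot{-}}1)\to D(|y|)$, an instance of the given step $D(u)\to D(u+1)$. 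Hence $\sib i$-PIND yields $(\forall y)A(y)$, i.e.\ $(\forall y)D(|y|)$, which is exactly the conclusion of $\lind{\sib i}$.

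\emph{Second}, to obtain $\lind{\pib i}$ from $\lind{\sib i}$ I would argue by a reversed count. Let $A\in\pib i$ satisfy the LIND premises $A(0)$ and $(\forall x)(A(x)\to A(x+1))$, fix $n$, and aim at $A(|n|)$. Suppose not; then $\neg A(|n|)$ holds, and I consider $C(x):=\neg A(|n|\mathbin{\dot{-}}x)$, which is $\sib i$ because $\neg A\in\sib i$. Now $C(0)=\neg A(|n|)$ holds by assumption, and the step $C(x)\to C(x+1)$ reduces, via $|n|\mathbin{\dot{-}}x=(|n|\mathbin{\dot{-}}(x+1))+1$ for $x<|n|$ (and triviality when $x\ge|n|$), to the contrapositive of the given step, so $C$ satisfies the premises of $\lind{\sib i}$. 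Applying $\sib i$-LIND and instantiating its conclusion $(\forall x)C(|x|)$ at $x:=n$ gives $C(|n|)=\neg A(|n|\mathbin{\dot{-}}|n|)=\neg A(0)$, contradicting the premise $A(0)$. Therefore $A(|n|)$ holds for every $n$, which is $\lind{\pib i}$.

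The conceptual content here is light, and the steps I expect to need the most care are the elementary facts about the length function and modified subtraction that must already be available in BASIC -- namely $|\lfloor y/2\rfloor|=|y|\mathbin{\dot{-}}1$, the behaviour of $|n|\mathbin{\dot{-}}x$ under incrementing $x$, and $|n|\mathbin{\dot{-}}|n|=0$ -- together with the routine but necessary observation that $\sib i$ and $\pib i$ are closed under substitution of terms such as $|y|$ and $|n|\mathbin{\dot{-}}x$. The only genuinely structural point is the passage from $\sib i$ to $\pib i$ induction in the second step, which exploits that reversing the count turns the \emph{upward} induction hypothesis into one matching the complementary formula $\neg A$, so that $\sib i$-LIND can be invoked on it; once this duality is arranged, no separate appeal to $\pib i$-PIND is required.
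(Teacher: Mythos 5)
The paper does not prove this theorem at all --- it is quoted from Buss's monograph \cite{Buss:1986} as a known fact, so there is no in-paper argument to compare yours against. Your derivation is the standard one from that source and is correct: $\lind{\sib i}$ from $\sib i$-PIND via $A(y):=D(|y|)$ and the identity $|\lfloor y/2\rfloor|=|y|\mathbin{\dot{-}}1$, and $\pib i$-LIND by the reversed count $C(x):=\neg A(|n|\mathbin{\dot{-}}x)$, whose negation-of-$\pib i$ matrix is (provably equivalent to) a $\sib i$ formula with parameter $n$, which LIND permits. The only points needing care are exactly the ones you flag (the BASIC facts about $|\cdot|$ and $\mathbin{\dot{-}}$, and closure of the formula classes under term substitution and under provable equivalence), and none of them is problematic.
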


We already mentioned the intricate relationship between Bounded Arithmetic theories and
the Polynomial Time Hierarchy in terms of definable functions. 
Furthermore, it is know that a collapse of the hierarchy of Bounded Arithmetic theories is equivalent to a collapse of the Polynomial Time Hierarchy, provable in Bounded Arithmetic~\cites{Krajicek:Pudlak:Takeuti:1991,Buss:1995,Zambella:1996};
With $\Ttheory{i}$ denoting the theory $\ind{\sib i}$ we have that
$\Ttheory{i}=\Stheory{i+1}$ is equivalent to $\sip{i+1}\subseteq\pip{i+1}/\mathit{poly}$ provable in $\Ttheory{i}$.

The Bounded Arithmetic theory \Sot is able to formalize meta-mathematics and essential constructions to prove G{\"o}del's Incompleteness Theorems~\cite{Buss:1986}.
A basis for such formalization is a feasible encoding of sequences of numbers.
For this paper we assume that a suitable encoding of sequences and operations on them can be formalized in $\Sot$ as done in~\cite{Buss:1986}. 
We assume the following notation:

\begin{itemize}
\item 
  With $\Seq{x_1, \dots, x_k}$ we denote the encoding of sequence
  $x_1, \dots, x_k$.
  We will use $\si$, $\tau$ etc to range over sequence encodings.
\item
  With `$\seqcons$' we denote concatenation of sequences:
  \[
    \Seq{x_1, \dots, x_k} \seqcons \Seq{x_{k+1}, \dots, x_{k+\ell}}
    \quad=\quad
    \Seq{x_{1}, \dots, x_k, x_{k+1}, \dots, x_{k+\ell}}
  \]
\item
  With `$\seqadd$' we denote the function that adds an element to the left or
  right of a sequence:
  \begin{align*}
    x \seqadd \si  &\quad=\quad \Seq{x} \seqcons \si  \\
    \si \seqadd x  &\quad=\quad \si \seqcons \Seq{x}
  \end{align*}
\end{itemize}
The predicate `being a sequence encoding', as well as the operations `$\seqadd$' and `$\seqcons$', 
can be defined in $\Sot$ with their usual properties proven.

In the following we will concentrate on bounding the number of symbols in transformations and constructions.
For an object $o$ we will define its length $\lh(o)$ to be the number of symbols occurring in $o$.
As all our constructions will happen in the context of a given derivation $\D$, we obtain that
the size of the G{\"o}delization of object $o$ can then be bounded by $\lh(o)$ times the size of the G{\"o}delization of~$\D$.  

Furthermore, the constructions for defining $o$ in the context of $\D$ will always be explicit and simple enough to be formalizable in \Sot, similar to constructions in \cite{Buss:1986} dealing with meta-mathematical notions.
In those cases where more involved induction is needed (as in Theorems~\ref{thm:soundness in S22} and~\ref{thm:soundness in S12}) these will be analyzed carefully.



\subsection{Notations}

In the remaining part of this section we will fix some notation used throughout this paper.
We use $\equiv$ for equality of syntax.

\begin{itemize}
\item With $\# S$ we denote the cardinality of a set $S$.
\item For a function $f$, $\dom(f)$ denotes its domain,
  $\rng(f)$ its range.
\item We will use the abbreviation $\MANY{x}$
  for a sequence $x_1, \dots, x_n$ of objects.
\item For a set $X$, tuples in $X^n$ are denoted with
  $(x_1,\dots,x_n)$.
\item $\max(X)$ computes the maximum (according to a given
  order) of the elements in $X$.
  $\max$ is applied to a tuple by computing the maximal component
  in it.
\end{itemize}


\paragraph{Tuples and sequences}
Technically, there is a difference between a tuple of the form
$(s_1,\dots,s_n)$, which is an element of $S^n$,
and the sequence $s_1,\dots,s_n$, which is a formal list used e.g.\ as
the arguments to an $n$-ary function.
We will identify both and write $s\in S^n$ and $f(s)$ in the same
context, as long as it does not lead to confusion, in which case we
will choose a more precise differentiation.



\section{Equational Theories}\label{sec:EqTh}

\subsection{Domain of discourse}

The intended domain of discourse  $\DomB$ will be binary strings representing numbers.
$\DomB$ can be defined inductively as follows:
\begin{equation*}
        v ::= \epsilon \mid v0 \mid v1
\end{equation*}

We will also use terms denoting binary strings, which are formed from
constant~$\epsilon$ using unary function symbols $\suc_0$ and $\suc_1$
to add a single digit to the right of a string. 
We also use $t0$ to denote $\suc_0(t)$, and $t1$ for $\suc_1(t)$
for terms~$t$.

\begin{remark}
  Our results are not restricted to binary strings, but can be applied to general free algebras as domains of discourse as done in~\cite{Beckmann:2002}.
  However, for sake of simplicity we will only consider binary strings in this paper.
\end{remark}

\subsection{Terms}

We fix the language we use for equational theories.

\begin{definition}[Language for Equational Theories]
  We have the following basic ingredients:
  \begin{itemize}
  \item A countable set $\cX$ of \emph{variables;}
    we use $x, y, x_1, x_2, \ldots$ to denote variables.
  \item A countable set $\cF$ of \emph{function symbols;}
    we use $f, g, h,f_1, f_2, \ldots$ to denote function symbols. 
    Each function $f\in\cF$ comes with a non-negative integer $\ar(f)$
    called its \emph{arity}.
    We assume that $\epsilon$, $\suc_0$ and $\suc_1$ are included in
    $\cF$; 
    $\epsilon$, $\suc_0$ and $\suc_1$ form the set $\cB$ of \emph{basic function symbols.}
  \end{itemize}
\end{definition}

\begin{remark}  
  A function with arity $0$ is called a \emph{constant}.
  For example, $\epsilon$ is a constant.
\end{remark}

\begin{definition}[Terms]
  Let $X\subseteq\cX$ and $F\subseteq\cF$.
  The set $\cT(X,F)$ of \emph{terms over $F$ and $X$}, or simply
  \emph{terms}, is defined inductively as follows:
  \begin{itemize}
  \item All variables in $X$ are terms.
  \item If $f\in F$ has arity $n$ and $t_1, \ldots, t_n$ are terms,
    then $f(t_1, \ldots, t_n)$ is a term.
  \end{itemize}
  We will use $s,t,u$ to denote terms.
  
  The \emph{length $\lh(t)$ of term $t$} is defined in the
  following way:
  The length of a variable is $1$, and, recursively,
  \[  \lh(f(t_1, \dots, t_n)) = \lh(t_1)+\dots+\lh(t_n)+1 \enspace. \]
  With $\var(t)$ we denote the \emph{set of variables} that are
  occurring in a term~$t$.
\end{definition}


\begin{definition}[Substitution]
  Let $t,u$ be terms and $x$ be a variable.
  The \emph{substitution of $u$ for $x$ in $t$}, denoted $t[u/x]$,
  is obtained by replacing any occurrence of $x$ in $t$ by~$u$.
\end{definition}

We extend substitution to sequences of variables and terms of the same
length by successively substituting terms:
$t[\vu/\vx]$ stands for  $t[u_1/x_1][u_2/x_2]\dots[u_n/x_n]$.

\begin{definition}[Instance]
  A \emph{substitution instance}, or \emph{instance}, of an equation $s=t$ is any
  $s[\vu/\vx] = t[\vu/\vx]$ for sequences of variables $\vx$ and terms
  $\vu$ of the same length.
\end{definition}

\subsection{Nice axiom systems}

We will consider axioms consisting of equations that satisfy
particular conditions, which have been called \emph{nice}
in~\cite{Beckmann:2002}.

\begin{definition}[Axioms]
  Let \Ax be a \emph{set of nice axioms for $\cF$.}
  That is, each equation in \Ax
  must be of one of the following forms, for some
  $f\in\cF\setminus\cB$, some $t, t_\eps\in\cT(\cF,\{\vx\})$, and
  $t_0, t_1\in\cT(\cF,\{x,\vx\})$:
  \begin{align*}
    f(\vx) &= t \\
    f(\eps, \vx) &= t_\epsilon \\
    f(x0, \vx) &= t_0 \\
    f(x1, \vx) &= t_1. 
  \end{align*}
  Furthermore, the left-hand side of an equation is occurring at most
  once among equations in \Ax, also modulo substitution.
\end{definition}

\begin{remark}
  The definition implies that for any $t=u$ in \Ax we have
  $\var(u)\subseteq\var(t)$.
\end{remark}

\begin{remark}
  Consider a term $f(\vt)$ with $f\in\cF\setminus\cB$.
  The property of \Ax being nice implies that there is at most one axiom $t=u$ in \Ax 
  such that $f(\vt)$ can be written as a substitution instance of~$t$.
\end{remark}

\begin{remark}
  The definition of a nice axiom system in~\cite{Beckmann:2002} also contains a completeness
  condition, requiring that each function symbol in $\cF\setminus\cB$ is
  defined by an equation, and that the case distinction in the latter
  part is complete.
  We omit this form of completeness, as it is not needed for our developments.
\end{remark}


The left-hand side of an equation in \Ax is of a very special form: an
argument to the out-most function symbol can either be a variable,
$\eps$, or $\suc_i(x)$ for some variable~$x$.
We capture these forms in the following definition.

\begin{definition}[Generalized variable]\label{def:generalized var}
  A \emph{generalized variable} is a term which either is a variable,
  or $\eps$, or of the form $\suc_i(x)$ for some variable $x$.
\end{definition}

\begin{remark}
  Consider an axiom $t=u$ in \Ax.
  It follows that $t$ must be of the form $f(\vt)$,
  that each $t_i$ is a generalized variable, hence each $t_i$ contains at most one variable.
  Furthermore, the same variable cannot occur simultaneously in $t_i$ and $t_j$ for $i\neq j$.
\end{remark}

\begin{definition}[Rules for equational reasoning]
  Let $s,t,u,s_1,\dots,s_n,t_1,\dots,t_n$ be terms.
  The following are the rules that can be used to derive equations:
  \begin{description}
  \item[Axiom]
    $\vdash t=u$,
    where $t=u$ is an instance of an equation in~\Ax.
  \item[Reflexivity]
    $\vdash t=t$
  \item[Symmetry]
    $t=u \vdash u=t$
  \item[Transitivity]
    $t=s,s=u \vdash t=u$
  \item[Compatibility]
    $t=u \vdash s[t/x] = s[u/x]$
  \item[Substitution]
    $t=u \vdash t[s/x] = u[s/x]$.
  \end{description}
  In the case of Substitution as stated above, we say that the application of Substitution binds the variable $x$.
\end{definition}

\begin{remark}
  We also make use of a display style for presenting rules, like
  \begin{prooftree}
    \AxiomC{}
    \UnaryInfC{$t = t$}
  \end{prooftree}
  for Reflexivity Rule, or 
  \begin{prooftree}
    \AxiomC{$t = s$}
    \AxiomC{$s = u$}
    \BinaryInfC{$t = u$}
  \end{prooftree}
  for Transitivity Rule.
\end{remark}

\begin{definition}[Derivations]\label{def:derivations}
  A \emph{derivation} is a finite tree whose nodes are labelled with
  instances of rules for equational reasoning, such that for each
  node, the premises of the rule at that node coincide with the
  conclusions of rules at corresponding child nodes.

  Derivations can also be defined inductively from rules for
  equational reasoning:
  Any instance of an Axiom or Reflexivity Rule is a derivation ending
  in the equation given by that rule.
  If $R$ is a unary rule (like Symmetry, Compatibility or
  Substitution) with premise $e'$ and conclusion $e$,
  and $\cD'$ a derivation ending in $e'$, then
  \begin{prooftree}
    \AxiomC{$\cD'$}
    \noLine
    \UnaryInfC{$e'$}
    \LeftLabel{$R$}
    \UnaryInfC{$e$}
  \end{prooftree}
  is a derivation ending in $e$.
  The only binary rule we are considering is the Transitivity Rule.
  If $\cD_1$ a derivation ending in $t=s$, and
  $\cD_2$ a derivation ending in $s=u$,
  then
  \begin{prooftree}
    \AxiomC{$\cD_1$}
    \noLine
    \UnaryInfC{$t=s$}
    \AxiomC{$\cD_2$}
    \noLine
    \UnaryInfC{$s=u$}
    \LeftLabel{Transitivity}
    \BinaryInfC{$t=u$}
  \end{prooftree}
  is a derivation ending in $t=u$.

  The \emph{length $\lh(\cD)$ of a derivation $\cD$} is defined as the
  sum of the lengths of the equations occurring in it, plus the length of any additional syntax
  needed to identify applications of rules --- 
  for example, for an application of Substitution 
  $t=u \vdash t[s/x] = u[s/x]$
  we add $\lh(t,s,x,u,s,x)$ to avoid
  complications in cases where $x$ is not occurring in $t$ or $u$.
  The length $\lh(t=u)$ of an equation $t=u$ is set to $\lh(t)+\lh(u)+1$.
  With $\var(\cD)$ we denote the set of variables occurring in $\cD$.
  $\bvar(\cD)$ is the set of variables occurring in $\cD$ that
  are bound by an application of the Substitution Rule.
\end{definition}

\begin{definition}[Pure Equational Theories]
The \emph{pure equational theory} $\PET(\Ax)$ consists of all equations that can
be derived using the Axiom, Reflexivity, Symmetry, Transitivity and
Compatibility Rules.
The \emph{pure equational theory with substitution} $\PETS(\Ax)$ is
obtained by additionally allowing the Substitution Rule in the derivation of
equations.
\end{definition}




An instance $s[\vu/\vx]=t[\vu/\vx]$ of $s=t$ in \Ax is called an \emph{injective renaming} of $s=t$, iff
the variables $\vx$ are pairwise distinct,
they satisfy $\{\vx\}=\var(s,t)$,
and $\vu$ is a list of pairwise distinct variables.

\begin{proposition}[Variable Normal Form]\label{prop:VNF}
  For $\PETS(\Ax)$ derivations, we can assume the following normal
  form:
  \begin{enumerate}
  \item\label{prop:VNF1}
    Axiom $\vdash t=u$ only occur in the form where
    $t=u$ is obtained by injectively renaming variables of an equation
    in \Ax.
    This implies that $t$ is of the form $f(\vt)$ with each $t_i$ a
    generalized variable, and that the same variable is not occurring in both
    $t_i$ and $t_j$ for $i\neq j$.
  \item\label{prop:VNF2}
    Each variable occurring in a normal derivation is either occurring
    in the equation in which the derivation ends, or is removed in
    exactly one application of Substitution (as the variable which is bound by that application of Substitution).
  \end{enumerate}
  Furthermore, if $\D\vdash t=u$, then there exists $\D'$ in Variable Normal Form 
  such that $\D'\vdash t=u$ and $\lh(\D')=\Oh(\lh(\D)^2)$.
\end{proposition}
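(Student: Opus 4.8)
The plan is to establish the two normal-form conditions separately and then combine them, tracking the length blow-up throughout. The overall strategy is to transform an arbitrary derivation $\D \vdash t=u$ in two passes: first rename variables to make all axiom instances injective renamings and to separate bound variables, and then verify that the resulting derivation satisfies condition~\eqref{prop:VNF2}. The key conceptual point is that the Substitution Rule is the only rule that \emph{binds} a variable, so the variables in a derivation split naturally into those that survive to the final equation and those that are consumed by some application of Substitution.

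First I would address condition~\eqref{prop:VNF1}. For each leaf of the derivation tree that is an Axiom instance $t=u$, the niceness of \Ax guarantees that $t=u$ arises as $s[\vv/\vy]=r[\vv/\vy]$ from a unique axiom $s=r$. I would replace this by the injective renaming $s[\vz/\vy]=r[\vz/\vy]$ using \emph{fresh} distinct variables $\vz$, and then immediately apply the Compatibility Rule (or a short sequence of Substitution/Compatibility steps) to recover the original instance $t=u$ that the rest of the derivation expects as a premise. The main subtlety here is ensuring the freshness of the renaming variables globally across the whole tree, so that distinct axiom leaves do not clash and so that the newly introduced variables do not collide with variables already occurring elsewhere; since a derivation is finite and $\var(\D)$ is a finite set, a single injective reservoir of fresh variable names suffices. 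Each such local repair adds at most $\Oh(\lh(s,r))$ symbols, and there are at most $\lh(\D)$ leaves, giving a quadratic bound $\Oh(\lh(\D)^2)$ as claimed.

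Next I would enforce condition~\eqref{prop:VNF2}, that every variable is either free in the end equation or bound by exactly one Substitution. The obstacle is that in a raw derivation the same variable name may be reused for distinct binding purposes, or a variable may be substituted away and later reintroduced. My plan is to perform a systematic $\alpha$-style renaming driven by the tree structure: traverse the derivation from the root towards the leaves, and whenever an application of Substitution $t=u \vdash t[s/x]=u[s/x]$ binds $x$, rename all occurrences of $x$ in the subderivation $\cD'$ above that application to a fresh variable not used anywhere else in $\D$. Because each Substitution application binds exactly one variable and each binding governs a well-defined subtree, this renaming is well-defined and produces a derivation in which each bound variable is associated to a unique Substitution application; the remaining variables, being untouched by any binding, must occur in the end equation by a straightforward induction on the tree. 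Again each renaming is linear in the size of the affected subterms, and the cumulative cost stays within $\Oh(\lh(\D)^2)$.

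The step I expect to be the main obstacle is controlling the interaction between the two passes and verifying the length bound rigorously rather than heuristically. Renaming to satisfy~\eqref{prop:VNF1} introduces new Compatibility/Substitution applications, which themselves bind or carry variables and must then be made to respect~\eqref{prop:VNF2}; one must check that the second pass does not undo the first and that the fresh variables introduced in the first pass are correctly classified (they are consumed by the repair steps, so they are bound, not free). I would handle this by fixing a global injective supply of fresh names \emph{before} either pass, so the two transformations operate on disjoint name spaces and commute cleanly. For the length estimate, I would argue that each of the $\Oh(\lh(\D))$ equations in $\D$ contributes repair syntax bounded by $\Oh(\lh(\D))$ (dominated by the size of the terms being renamed and the added rule annotations), so the total is $\Oh(\lh(\D)^2)$; the explicit inclusion of $\lh(t,s,x,u,s,x)$ in the definition of $\lh(\cD)$ is exactly what makes this bookkeeping go through without hidden costs from variables that fail to occur.
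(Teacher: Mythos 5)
Your overall route is the paper's: replace each axiom leaf by an injective renaming and recover the original instance by rule applications beneath it, then rename bound variables apart, with a quadratic length bound coming from linear-size repairs at linearly many positions. But the rule you name first for the recovery step cannot do the job. Compatibility has the form $t=u \vdash s[t/x]=s[u/x]$: it embeds both sides of an already derived equation into a common context $s$, and never instantiates variables occurring \emph{inside} $t$ and $u$. To pass from the injectively renamed axiom $t[\vy/\vx]=u[\vy/\vx]$ back to the instance $t[\vs/\vx]=u[\vs/\vx]$ you must substitute the terms $s_i$ for the fresh variables $y_i$ on both sides simultaneously, which is exactly what a chain of Substitution applications $[s_1/y_1],\dots,[s_n/y_n]$ below the renamed axiom achieves --- this is the paper's construction. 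Your parenthetical ``or Substitution'' is the only workable branch; as primarily written, the mechanism fails.

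Second, your claim in the second pass that the variables untouched by any binding ``must occur in the end equation by a straightforward induction on the tree'' is false: a variable can disappear through the middle term $s$ of a Transitivity inference $t=s,\ s=u\vdash t=u$ (or through a Compatibility step whose context does not contain the substituted variable) without ever being bound by a Substitution. Renaming does not cure this; to meet condition~\eqref{prop:VNF2} literally one must in addition insert vacuous Substitution applications binding such variables --- note that the definition of $\lh$ for Substitution explicitly budgets for the case where the bound variable does not occur in the premise, so this costs $\Oh(\lh(\D))$ per variable and stays within the quadratic bound. The paper's own one-line argument for \eqref{prop:VNF2} is also silent on this point, but your proposal asserts the problematic statement explicitly, so the induction you describe would not close as stated.
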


\begin{proof}
  For \eqref{prop:VNF1}, consider an equation $t=u$ in \Ax.
  As \Ax is nice, we have that $t$ is of the form $f(t_1,\dots,t_n)$
  with each $t_i$ a generalized variable potentially containing one
  variable $x_i$, and that these variables are pairwise distinct.
  Consider terms $s_1,\dots, s_n$, and the instance
  $\vdash t[\vs/\vx]=u[\vs/\vx]$ of the Axiom Rule.  This can be
  replaced by $\vdash t[\vy/\vx]=u[\vy/\vx]$ where $\vy$ is a list
  $y_1\dots,y_n$ of fresh, pairwise distinct variables, followed by
  applications of the Substitution Rule for successively applying
  substitutions $[s_1/y_1]$, $[s_2/y_2]$, \dots, $[s_n/y_n]$.

  For \eqref{prop:VNF2}, we observe that we can replace all
  occurrences of a fixed variable by a fresh variable throughout a
  derivation ending in an equation $e$, obtaining a similar derivation of the
  equation $e$ potentially with the former variable being renamed to the
  latter fresh variable.
  
  The above transformation at most squares the length of a derivation. 
\end{proof}

%
%
%

\begin{definition}[Formal Consistency]
  With $\Cons(\PETS(\Ax))$ we denote the sentence in the language of
  Bounded Arithmetic which expresses that there
  is no derivation in $\PETS(\Ax)$ ending in $0=1$, where
  $0$ denotes $\suc_0(\eps)$ and $1$ denotes $\suc_1(\eps)$.
\end{definition}

\section{Approximation Semantics}\label{sec:Semantics}

Infeasible values, although finite, can be considered as infinite bit
strings within theories of feasibility like Bounded Arithmetic.
This is relevant when evaluating functions formally in Bounded
Arithmetic.
Thus we will make use of notions from domain theory to define a method
for evaluating terms occurring in equational proofs.

\subsection{Approximate values}

The notion of approximate values $v$ is defined
in~\cite{Yamagata:2018},
which adds `unknown value' of a term~\cite{Beckmann:2002}, denoted with
`$*$', to bit-strings.

\begin{definition}[Approximate values]
  The set $\Dom$ of \emph{approximate values} is defined
  inductively as follows:
  \begin{equation*}
    v ::= \epsilon \mid v0 \mid v1 \mid *
  \end{equation*}
  The \emph{gauge} $\G(v)$ of $v\in\Dom$ is defined recursively:
    \begin{gather*}
        \G(\epsilon) = \G(*) = 1\\
        \G(v0) = \G(v1) = \G(v)+1
    \end{gather*}
    For a tuple $\vw = (w_1, \ldots, w_n) \in \Dom^n$,
    its \emph{gauge} is given as
    $\G(\vw) = \max\{\G(w_1), \ldots, \G(w_n)\}$,
    and its \emph{extent} as $\E(\vw) = n$.
\end{definition}

\subsection{Approximation relation}

A relation $\APPROXEQ$ has been defined
in~\cite{Beckmann:2002}.
Here we will consider the converse $\apprx$ of  $\APPROXEQ$.

\begin{definition}[Approximation relation]
  The \emph{approximation relation} $\apprx$ is a binary relation
  over $\Dom$, defined inductively as follows: 
  \begin{itemize}
  \item $*\apprx v$ for any $v \in \Dom$.
  \item $\epsilon \apprx \epsilon$.
  \item if $v_1 \apprx v_2$, then
    $v_10 \apprx v_20$ and $v_11 \apprx v_21$.
  \end{itemize}
  We pronounce `$v\apprx w$' as `$v$ approximates $w$'.
    
  We extend $\apprx$ to tuples:
  $(v_1,\dots,v_n)\apprx(w_1,\dots,w_n)$ iff $v_i\apprx w_i$ for each~$i$.
\end{definition}

\begin{proposition}
  $\apprx$ is a partial order on $\Dom^n$, that is, it is reflexive,
  anti-symmetric and transitive.
  \qed
\end{proposition}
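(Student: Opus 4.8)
The plan is to first reduce to the one-dimensional case. Since the relation on $\Dom^n$ is defined componentwise, namely $(v_1,\dots,v_n)\apprx(w_1,\dots,w_n)$ iff $v_i\apprx w_i$ for every $i$, each of reflexivity, anti-symmetry and transitivity on $\Dom^n$ follows at once from the corresponding property of $\apprx$ on $\Dom$: reflexivity and transitivity transfer coordinatewise verbatim, and anti-symmetry on tuples reduces to anti-symmetry in each of the $n$ coordinates. So it suffices to treat the single value case.

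The technical engine for $\Dom$ is a generation (inversion) lemma read directly off the three defining clauses: if $a\apprx b$ holds, then according to the syntactic form of $a$ exactly one of the following is the case --- (i) $a=*$, with $b$ then unconstrained; (ii) $a=b=\eps$; or (iii) $a=a'i$ and $b=b'i$ for the same digit $i\in\{0,1\}$ with $a'\apprx b'$. In particular, when $a$ is neither $*$ nor $\eps$, the relation $a\apprx b$ can only have been produced by the third clause, which forces $b$ to end in the same digit as $a$ and forces $a',b'$ to stand in relation $\apprx$. This inversion is what drives both of the nontrivial proofs.

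Reflexivity, $v\apprx v$, I would prove by structural induction on $v$ (equivalently by induction on the gauge $\G(v)$): the base cases $v=\eps$ and $v=*$ are the clauses $\eps\apprx\eps$ and $*\apprx*$, and for $v=v'i$ the induction hypothesis $v'\apprx v'$ yields $v'i\apprx v'i$ by the third clause. For anti-symmetry, I assume $v\apprx w$ and $w\apprx v$ and induct on $\G(v)$: if $v=*$, then $w\apprx v=*$ together with the generation lemma leaves only $w=*$, so $v=w$; if $v=\eps$, then $v\apprx w$ forces $w=\eps$; and if $v=v'i$, the generation lemma applied to $v\apprx w$ gives $w=w'i$ with $v'\apprx w'$, while applied to $w\apprx v$ it gives $w'\apprx v'$, so the induction hypothesis yields $v'=w'$ and hence $v=w$. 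Transitivity is handled the same way: given $u\apprx v$ and $v\apprx w$, I induct on the form of $u$; the cases $u=*$ and $u=\eps$ are immediate (in the first, $u=*\apprx w$ by the first clause; in the second, $u=v=\eps$, so $u\apprx w$ is literally $v\apprx w$), and for $u=u'i$ the generation lemma applied to $u\apprx v$ produces $v=v'i$ with $u'\apprx v'$, then applied to $v\apprx w$ produces $w=w'i$ with $v'\apprx w'$, whence the induction hypothesis gives $u'\apprx w'$ and the third clause gives $u'i\apprx w'i$.

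The only genuine subtlety --- and the step I would take care over --- is the inversion argument inside the successor case: one must confirm that a relation $a'i\apprx b$, with $a'i$ neither $*$ nor $\eps$, can have arisen only through the third defining clause, so that $b$ is necessarily of the form $b'i$ with the matching trailing digit and with $a'\apprx b'$. Everything else is a routine structural induction over approximate values. Since all the inductions are on the gauge of a single element of $\Dom$, they present no obstacle to being carried out explicitly.
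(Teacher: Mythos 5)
Your proof is correct: the componentwise reduction, the inversion (generation) lemma for the three defining clauses, and the structural inductions for reflexivity, anti-symmetry and transitivity on $\Dom$ are all sound. The paper states this proposition without proof (it is marked as immediate), and your argument is precisely the routine structural induction the authors leave to the reader, so there is no divergence in approach to report.
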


\begin{definition}[Compatible]
  $u,v\in\Dom$ are \emph{compatible} if $u \apprx v$ or $v \apprx u$.
  $(u_1, \ldots, u_n)$ and $(v_1, \ldots, v_n)$ in $\Dom^n$ are
  compatible if each $u_i$ and $v_i$ are.
  We denote $\vu$ and $\vv$ being compatible with $\vu \compat \vv$.
\end{definition}

The following lemma has already been proven in~\cite{Beckmann:2002}:
\begin{lemma}\label{lem:compat}
  If $\vu,\vv,\vw \in \Dom^n$ and $\vu,\vv \apprx \vw$,
  then $\vu \compat \vv$.
  \qed
\end{lemma}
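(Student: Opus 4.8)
The plan is to reduce the statement to the one-dimensional case and then argue by induction on the structure of the right-hand value. Since both $\apprx$ and $\compat$ are defined componentwise on tuples, it suffices to treat the case $n=1$: I would show that for $u,v,w\in\Dom$, if $u\apprx w$ and $v\apprx w$ then $u\compat v$. The full statement then follows by applying this coordinatewise to each $w_i$ of $\vw=(w_1,\dots,w_n)$, since $\vu\apprx\vw$ and $\vv\apprx\vw$ give $u_i,v_i\apprx w_i$ for every $i$, and $\compat$ on tuples is the conjunction of the coordinate relations.

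For the scalar case I would induct on $w$ (equivalently, on its gauge $\G(w)$), the engine being an inversion analysis of the inductive definition of $\apprx$, determining which values can approximate a given $w$. The key observation is that $*\apprx x$ holds for every $x\in\Dom$, so whenever $u=*$ or $v=*$ compatibility is immediate; thus in each case it only remains to treat the situation where neither $u$ nor $v$ equals $*$. Inversion then yields three possibilities for $w$. If $w=*$, then $u\apprx *$ forces $u=*$ (the only clause producing $*$ on the right is $*\apprx*$), so this subcase is vacuous under our assumption. If $w=\epsilon$, then the only non-$*$ value approximating $\epsilon$ is $\epsilon$ itself (via $\epsilon\apprx\epsilon$), so $u=v=\epsilon$ and $u\compat v$. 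If $w=w'i$ for $i\in\{0,1\}$, then a non-$*$ value approximating $w$ must, by the congruence clause, be of the form $u'i$ with $u'\apprx w'$ (it cannot be $\epsilon$, as $\epsilon$ approximates only $\epsilon$); hence $u=u'i$, $v=v'i$ with $u',v'\apprx w'$. The induction hypothesis applied to $w'$ gives $u'\compat v'$, i.e.\ $u'\apprx v'$ or $v'\apprx u'$; applying the congruence clause once more lifts this to $u'i\apprx v'i$ or $v'i\apprx u'i$, that is $u\compat v$. This closes the induction.

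The main obstacle is getting the inversion step exactly right, and in particular respecting the asymmetry of $*$: the relation $*\apprx x$ holds for all $x$, but $x\apprx *$ fails for $x\neq *$, so the two directions of $\apprx$ must be kept apart in the case analysis. Once the inversion facts are stated precisely, the remaining work—matching the trailing digit $i$ of $u$, $v$ and $w$, and propagating compatibility through the congruence clause—is routine, and the induction on $\G(w)$ makes the argument transparently formalizable.
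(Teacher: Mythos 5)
Your proof is correct. The paper gives no proof of this lemma at all---it is stated with a bare \qed and attributed to an earlier work---so there is nothing to compare against, but your argument (componentwise reduction to $n=1$, then structural induction on $w$ with an inversion analysis of the clauses defining $\apprx$, handling the $*$ cases separately and matching the trailing digit in the successor case) is exactly the standard proof one would expect, and every step checks out against the inductive definition of $\apprx$ given in the paper.
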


\begin{lemma}\label{lem:maxapprx}
  Any finite subset $S$ of $\Dom$ of pairwise compatible elements
  has a maximal element w.r.t.~$\apprx$ which we denote with
  $\maxapprx(S)$,
  where $\maxapprx(\emptyset)=*$.
  \qed
\end{lemma}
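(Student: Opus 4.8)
The plan is to exploit the observation that, for elements of $\Dom$, compatibility is nothing other than $\apprx$-comparability: by definition $u \compat v$ holds iff $u \apprx v$ or $v \apprx u$. Hence a pairwise compatible set $S$ is a finite \emph{chain} with respect to the partial order $\apprx$, and the claimed maximal element is simply the maximum of this chain. I would produce it by induction on the cardinality $\# S$.

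For the base case $\# S = 0$ we set $\maxapprx(\emptyset) = *$; this is (vacuously) maximal and matches the stated convention. For the inductive step, pick any $v \in S$ and let $m = \maxapprx(S \setminus \{v\})$, which by the induction hypothesis satisfies $u \apprx m$ for all $u \in S \setminus \{v\}$ and is either a member of $S \setminus \{v\}$ or equal to $*$. In either case $m$ is compatible with $v$: if $m \in S$ this is the pairwise-compatibility hypothesis for $S$, and if $m = *$ it follows from $* \apprx v$. Thus $v \apprx m$ or $m \apprx v$; define $\maxapprx(S) = m$ in the first case and $\maxapprx(S) = v$ in the second. Transitivity and reflexivity of $\apprx$ (already recorded) then give $u \apprx \maxapprx(S)$ for every $u \in S$, so $\maxapprx(S)$ is the desired maximal element.

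The only facts used are that $\apprx$ is a partial order and that $*$ is its least element, both immediate from the earlier definitions, so the argument is routine. Since the construction is an explicit finite iteration over the (encoded) set $S$ --- repeatedly replacing the running maximum by the $\apprx$-larger of itself and the next element --- it is straightforwardly formalizable in $\Sot$. I expect the only point requiring care to be the compatibility check in the inductive step: one must observe that the running maximum is always either a genuine element of $S$ or the bottom element $*$, so that the pairwise-compatibility hypothesis (respectively $* \apprx v$) applies and the comparison $v \apprx m$ or $m \apprx v$ is guaranteed. A tempting shortcut --- selecting the element of maximal gauge --- fails, since $\G$ is only weakly monotone along $\apprx$ (for instance $\G(*) = \G(\eps) = 1$ while $* \apprx \eps$), so ties in gauge need not be resolved correctly; the inductive comparison avoids this issue.
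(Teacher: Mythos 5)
Your proof is correct; the paper states this lemma without proof (the \qed marks it as immediate), and your argument --- observing that pairwise compatibility is exactly $\apprx$-comparability, so $S$ is a finite chain whose greatest element is extracted by iterated pairwise comparison --- is precisely the routine argument the paper is taking for granted. Your closing caveat about gauge not determining the maximum is a correct but inessential aside.
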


\begin{definition}[Generator]
  A \emph{generator} for $\Dom^n \to \Dom$ is a mapping $\vu \mapsto v$
  with $\vu \in \Dom^n$ and $v \in \Dom\setminus\{*\}$.
\end{definition}

\begin{definition}[Consistent set]\label{def:consis_set}
  A \emph{consistent set} $\Finite{f}$ of $\Dom^n \to \Dom$ is a
  finite set of generators satisfying the following condition:
  \begin{quote}
    if $\vx \mapsto y, \vu \mapsto v \in \Finite{f}$ and  $\vx \compat \vu$,
    then $y \compat v$.
  \end{quote}
\end{definition}


\begin{definition}[Finitely generated maps]
  A consistent set $\Finite{f}$ defines a mapping, which we call a \emph{finitely generated map} or just \emph{map},  via
  \begin{equation*}
    \Finite{f}(\vx) = \maxapprx \{ v \mid \exists \vw, \vw \apprx \vx
    \text{ and } \vw \mapsto v \in \Finite{f} \}. 
  \end{equation*}
  We sometimes write $\Finite{f}[\vx]$ to denote the set
  \[  
    \{ v \mid \exists \vw, \vw \apprx \vx
      \text{ and } \vw \mapsto v \in \Finite{f} \} 
  \]
  so that $\Finite{f}(\vx) = \maxapprx \Finite{f}[\vx]$.
\end{definition}

To see that maps are well-defined, consider two generators
$\vw \mapsto v$ and $\vw' \mapsto v'$ in $\Finite{f}$
with $\vw,\vw'\apprx \vx$.
With Lemma~\ref{lem:compat} we obtain $\vw\compat \vw'$.
Hence $v\compat v'$ as $\Finite{f}$ is consistent.
Thus, using Lemma~\ref{lem:maxapprx}, the set
\[
  \Finite{f}[\vx] = \{ v \mid \exists \vw, \vw \apprx \vx
    \text{ and } \vw \mapsto v \in \Finite{f} \}
\]
has a maximal element w.r.t.~$\apprx$.

\begin{lemma}[Expansion property of maps]\label{lem:expansion
    property of maps}
    Let $\Finite{f}_1$ and $\Finite{f}_2$ be consistent sets of
    $\Dom^n \to \Dom$.
    If $\Finite{f}_1 \subseteq \Finite{f}_2$, then
    $\Finite{f}_1(\vv) \apprx \Finite{f}_2(\vv)$
    for $\vv \in \Dom^n$.
\end{lemma}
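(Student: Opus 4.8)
The plan is to reduce the inequality $\Finite{f}_1(\vv) \apprx \Finite{f}_2(\vv)$ to a comparison of the two generating sets $\Finite{f}_1[\vv]$ and $\Finite{f}_2[\vv]$, and then to invoke a simple monotonicity property of $\maxapprx$ on chains. First I would observe that the hypothesis $\Finite{f}_1 \subseteq \Finite{f}_2$ immediately lifts to these sets: any generator $\vw \mapsto v \in \Finite{f}_1$ witnessing membership of $v$ in $\Finite{f}_1[\vv]$ (so $\vw \apprx \vv$) also lies in $\Finite{f}_2$, hence $\Finite{f}_1[\vv] \subseteq \Finite{f}_2[\vv]$.

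Next I would record that each of these sets is totally ordered by $\apprx$, i.e.\ is a chain. Indeed, for any $v, v' \in \Finite{f}_2[\vv]$ with witnesses $\vw, \vw' \apprx \vv$, Lemma~\ref{lem:compat} gives $\vw \compat \vw'$, and consistency of $\Finite{f}_2$ then yields $v \compat v'$; the same reasoning applies to $\Finite{f}_1[\vv]$ using consistency of $\Finite{f}_1$. Both sets are therefore finite and pairwise compatible, so by Lemma~\ref{lem:maxapprx} their $\maxapprx$ values exist, and — being maximal in a chain — they are in fact the greatest elements of their respective sets, so every element of the set approximates its $\maxapprx$.

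The crux is then a short monotonicity step: if $S \subseteq T$ are finite pairwise-compatible sets, then $\maxapprx S \apprx \maxapprx T$. If $S = \emptyset$ this holds because $\maxapprx S = *$ and $* \apprx w$ for every $w$. Otherwise $\maxapprx S$ is an element of $S$, hence of $T$, and since $\maxapprx T$ dominates every element of $T$ we obtain $\maxapprx S \apprx \maxapprx T$. Applying this with $S = \Finite{f}_1[\vv]$ and $T = \Finite{f}_2[\vv]$ yields $\Finite{f}_1(\vv) = \maxapprx \Finite{f}_1[\vv] \apprx \maxapprx \Finite{f}_2[\vv] = \Finite{f}_2(\vv)$, as required.

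I expect the only delicate point to be making precise that $\maxapprx$ of a pairwise-compatible finite set is the genuine greatest element rather than merely \emph{a} maximal element; this is exactly where the chain observation does the real work, and it is what justifies the step ``$\maxapprx T$ dominates every element of $T$''. Everything else is bookkeeping built on Lemmas~\ref{lem:compat} and~\ref{lem:maxapprx}, and the argument uses no induction beyond what is already packaged in those lemmas, so it poses no difficulty for formalization in~\Sot.
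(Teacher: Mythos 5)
Your proposal is correct and follows the same route as the paper: the paper's entire proof is to note $\Finite{f}_1[\vv]\subseteq\Finite{f}_2[\vv]$ and conclude, and you simply make explicit the implicit final step (that both sets are finite chains by consistency and Lemma~\ref{lem:compat}, so $\maxapprx$ is a genuine greatest element and is monotone under inclusion). The extra detail is accurate and arguably worth recording, but it is not a different argument.
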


\begin{proof}
  From $\Finite{f}_1 \subseteq \Finite{f}_2$ we immediately obtain
  $\Finite{f}_1[\vv] \subseteq \Finite{f}_2[\vv]$.
  Hence the assertion follows.
\end{proof}

For $\vx\apprx \vy$ we have $\Finite{f}[\vx]\subseteq\Finite{f}[\vy]$, thus we
obtain that finitely generated maps are monotone.


\begin{lemma}[Monotonicity of finitely generated maps]\label{lem:monontone maps}
  Finitely generated maps are monotone w.r.t.~$\apprx$. 
  \qed
\end{lemma}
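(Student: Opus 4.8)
The plan is to reduce the statement to two facts that are already essentially present in the material preceding the lemma: the set-inclusion behaviour of the generator sets $\Finite{f}[\cdot]$, and a monotonicity property of the operator $\maxapprx$. First I would make explicit what is to be shown: for $\vx,\vy\in\Dom^n$ with $\vx\apprx\vy$, we must establish $\Finite{f}(\vx)\apprx\Finite{f}(\vy)$.

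The first step is to prove the inclusion $\Finite{f}[\vx]\subseteq\Finite{f}[\vy]$, which is immediate from transitivity of $\apprx$. Indeed, if $v\in\Finite{f}[\vx]$, then there is a generator $\vw\mapsto v\in\Finite{f}$ with $\vw\apprx\vx$; since $\vx\apprx\vy$ and $\apprx$ is a partial order on $\Dom^n$, we obtain $\vw\apprx\vy$, hence $v\in\Finite{f}[\vy]$. This is exactly the observation recorded in the paragraph preceding the lemma.

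It then remains to pass from this inclusion to the approximation $\maxapprx\Finite{f}[\vx]\apprx\maxapprx\Finite{f}[\vy]$, which is precisely the inference already invoked in the proof of Lemma~\ref{lem:expansion property of maps}. I would isolate it as the single nontrivial ingredient: \emph{if $S\subseteq T$ are finite subsets of $\Dom$ of pairwise compatible elements, then $\maxapprx S\apprx\maxapprx T$.} To see this, note that for a finite set of pairwise compatible elements the maximal element supplied by Lemma~\ref{lem:maxapprx} is in fact the greatest element: given any $s\in S$, compatibility yields $s\apprx\maxapprx S$ or $\maxapprx S\apprx s$, and in the latter case maximality together with antisymmetry forces $s=\maxapprx S$, so in either case $s\apprx\maxapprx S$. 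When $S\neq\emptyset$ the element $\maxapprx S$ therefore lies in $S\subseteq T$, whence $\maxapprx S\apprx\maxapprx T$ because $\maxapprx T$ is the greatest element of $T$; and when $S=\emptyset$ we have $\maxapprx S=*\apprx\maxapprx T$ directly from the definition of $*$. Both $\Finite{f}[\vx]$ and $\Finite{f}[\vy]$ consist of pairwise compatible elements, as verified in the well-definedness discussion following the definition of finitely generated maps via Lemmas~\ref{lem:compat} and~\ref{lem:maxapprx}, so the ingredient applies and delivers $\Finite{f}(\vx)\apprx\Finite{f}(\vy)$.

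The proof is short, and the only real obstacle is the $\maxapprx$-monotonicity step. The subtlety there is recognising that pairwise compatibility upgrades the merely \emph{maximal} element guaranteed by Lemma~\ref{lem:maxapprx} into a genuine \emph{greatest} element, since it is this stronger property that lets the greatest element of the smaller set be compared with the greatest element of the larger one. Everything else is a direct appeal to transitivity of $\apprx$ and to inclusion of the generator sets.
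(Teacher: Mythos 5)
Your proof is correct and follows essentially the same route as the paper, which disposes of the lemma via the remark immediately preceding it: $\vx\apprx\vy$ gives $\Finite{f}[\vx]\subseteq\Finite{f}[\vy]$, and monotonicity of $\maxapprx$ under inclusion (for sets of pairwise compatible elements) does the rest. You merely make explicit the step the paper leaves implicit, namely that pairwise compatibility turns the maximal element of Lemma~\ref{lem:maxapprx} into a greatest element, which is a worthwhile clarification but not a different argument.
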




\begin{remark}
  There are monotone maps which cannot be represented by finite consistent sets.  
  For example, the identity function from $\Dom$ to $\Dom$ is monotone but
  cannot be represented by a finite consistent set.
\end{remark}




\begin{definition}[Measures for consistent sets]
  We define two measures for consistent sets $\Finite{f}$:
  \begin{itemize}
  \item The gauge $\G(\Finite{f})$ is given as
    \[  \max \{ \G(\vv),\G(w) \mid \vv \mapsto w \in \Finite{f} \}
      \enspace. \]
  \item The extent $\E(\Finite{f})$ is given as
    \[ \max \{\E(\vv) \mid \vv \mapsto w \in \Finite{f} \}
      \enspace. \]
  \end{itemize}
\end{definition}

\begin{remark}
  Using the above measures, the length of $\Finite{f}$, $\lh(\Finite{f})$, in some natural serialization of $\Finite{f}$, can be bounded by
  $\lh(\Finite{f}) = \Oh(\# \Finite{f} \cdot \E(\Finite{{f}}) \cdot
    \G(\Finite{{f}}))$.
\end{remark}

\begin{definition}[Frame]
  A \emph{frame} $F$ is a partial, finite mapping of
  function symbols $f\in\cF\setminus\cB$
  to  consistent sets.
  We extend $F$ to all $f\in\cF\setminus\cB$
  by setting $F(f) = \bot$ for $f\notin\cB\cup\dom(F)$,
  where $\bot$ denotes the empty set $\emptyset$.
  
  The set of frames is partially ordered by 
  \begin{equation*}
    F_1 \forder F_2 \iff \forall f,  F_1(f) \subseteq F_2(f) \enspace.
  \end{equation*}

  A frame $F$ defines an \emph{evaluation} $F(f)(v)$ for
  $f\in\cF$ and $\vv\in\Dom^{\ar(f)}$ as follows:  
  \begin{itemize}
  \item If $f\in\cB$, let $F(f)(\vv) = f(\vv)$
  \item If $f\notin\cB$ and $F(f) = \Finite{f}$, let
    $F(f)(\vv) = \Finite{f}(\vv)$.
  \end{itemize}
  Observe that for $f \notin \dom(F)\cup\cB$,
  we have $F(f) = \bot$,
  hence $F(f)(\vv) = \bot(\vv) = *$.
\end{definition}

\begin{definition}[Measures for frames]
  We use the following measures for frames:
  \begin{itemize}
  \item The width of $F$ is given by
    $\W(F) = \max \{\# F(f) \mid f \in \dom(F) \}$.
  \item The gauge of $F$ is given by
    $\G(F) = \max \{ \G(f) \mid f \in \dom(F) \}$.
  \item The extent of $F$ is given by
    $\E(F) = \max \{ \E(f) \mid f \in \dom(F) \}$.
  \end{itemize}
\end{definition}

\begin{remark}
  Using the above measures, the length of $F$, $\lh(F)$, in some natural serialization of $F$, can be bounded by
  $\lh(F) = \Oh(\#\dom(F) \cdot \W(F) \cdot \E(F) \cdot \G(F))$ .
\end{remark}

\begin{definition}[Assignments]
  An \emph{assigment} $\rho$ is a partial, finite mapping from
  variables $\cX$ to approximations $\Dom$.
  We extend assignments outside their domain, by setting
  $\rho(x)=*$ for $x\notin\dom(\rho)$.

  We extend the approximation order $\apprx$ to assignments
  pointwise:
  \[
    \text{Let}\quad  \rho_1\apprx\rho_2 \quad\text{iff}\quad
    \forall x, \rho_1(x)\apprx\rho_2(x)\enspace.
  \]

  With $\rho[x\mapsto v]$ we denote the assignment that behaves like
  $\rho$ but maps variable~$x$ to approximation $v$:
  \[
    \rho[x\mapsto v](y)\quad=\quad
    \left\{
      \begin{array}{l@{\ \colon\ }l}
        v & \text{if } y=x \\
        \rho(y) & \text{otherwise.}
      \end{array}
    \right.
  \]

  We apply assignments to generalized variables in the natural way,
  e.g., $\rho(\suc_i(x)) = \suc_i(\rho(x))$.
\end{definition}

\begin{definition}[Measures for assignments]
  We use the following measures for assignments:
  \begin{itemize}
  \item
    The width of $\rho$ is given by
    $\W(\rho) = \# \dom(\rho)$.
  \item
    The gauge of $\rho$ is given by
    $\G(\rho) = \max\{ \G(v) \mid v\in\rng(\rho) \}$.
  \end{itemize}
\end{definition}

\begin{remark}
  Using the above measures, the length of $\rho$, $\lh(\rho)$, in some natural serialization of $\rho$, can be bounded by $\lh(\rho) \ =\ \Oh(\W(\rho) \cdot \G(\rho))$.
\end{remark}

\begin{definition}[Evaluation]
    Let $\rho$ be an assignment, $F$ a frame, and $t$ a term.
    The \emph{evaluation $\evalfr{t}$ of $t$ under $F,\rho$}
    is defined recursively as follows:
    \begin{align*}
      \evalfr{x} &\ =\ \rho(x) \quad\text{for a variable } x\in\cX;\\
      \evalfr{f(t_1, \dots, t_n)}
               &\ =\ F(f)(\evalfr{t_1}, \dots, \evalfr{t_n}) \enspace.
    \end{align*}
\end{definition}

We have the following immediate properties of evaluations.

\begin{lemma}
  \begin{enumerate}
  \item
    $\evalfr{t}\in\Dom$
  \item 
    $\evalfr{t}$ is monotone in $F$ and $\rho$ w.r.t.~$\forder$.
  \end{enumerate}
\end{lemma}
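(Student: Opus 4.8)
The plan is to establish both parts by structural induction on the term $t$, using the properties of finitely generated maps and the approximation order collected earlier.

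For part (1), the base case $t \equiv x$ is immediate, since $\evalfr{x} = \rho(x)$ and every assignment takes values in $\Dom$ (recalling that $\rho(x) = *$ outside its domain). For the inductive step $t \equiv f(t_1,\dots,t_n)$ I would apply the induction hypothesis to obtain $\evalfr{t_i} \in \Dom$ for each $i$, so that $(\evalfr{t_1},\dots,\evalfr{t_n}) \in \Dom^n$. It then remains to check that the evaluation $F(f)$ maps $\Dom^n$ into $\Dom$ in both clauses of its definition: for $f \in \cB$ the basic functions $\epsilon$, $\suc_0$, $\suc_1$ clearly stay within $\Dom$, and for $f \notin \cB$ with $F(f) = \Finite{f}$ the value $\Finite{f}(\vv) = \maxapprx \Finite{f}[\vv]$ lies in $\Dom$ by Lemma~\ref{lem:maxapprx}, with the convention $\maxapprx(\emptyset) = *$ covering the case $F(f) = \bot$.

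For part (2) I would first make the monotonicity claim precise as a joint statement subsuming separate monotonicity in each argument: whenever $F_1 \forder F_2$ and $\rho_1 \apprx \rho_2$, then $\eval{t}_{F_1,\rho_1} \apprx \eval{t}_{F_2,\rho_2}$, where both sides lie in $\Dom$ by part (1) so that $\apprx$ applies. The base case $t \equiv x$ is exactly the hypothesis $\rho_1(x) \apprx \rho_2(x)$. For the inductive step $t \equiv f(t_1,\dots,t_n)$ I set $\vv = (\eval{t_1}_{F_1,\rho_1},\dots,\eval{t_n}_{F_1,\rho_1})$ and $\vw = (\eval{t_1}_{F_2,\rho_2},\dots,\eval{t_n}_{F_2,\rho_2})$; the induction hypothesis gives $\vv \apprx \vw$ componentwise. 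I would then factor the target inequality $F_1(f)(\vv) \apprx F_2(f)(\vw)$ through the intermediate value $F_1(f)(\vw)$. The first step $F_1(f)(\vv) \apprx F_1(f)(\vw)$ is monotonicity of the map $F_1(f)$ in its argument, which holds by Lemma~\ref{lem:monontone maps} for $f \notin \cB$ and directly from the clause ``$v \apprx w$ implies $\suc_i(v) \apprx \suc_i(w)$'' of the definition of $\apprx$ for $f \in \cB$. The second step $F_1(f)(\vw) \apprx F_2(f)(\vw)$ is monotonicity in the frame, which is trivial for $f \in \cB$ (where $F_1(f) = F_2(f) = f$) and follows for $f \notin \cB$ from $F_1(f) \subseteq F_2(f)$ together with the expansion property, Lemma~\ref{lem:expansion property of maps}. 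Chaining the two steps via transitivity of the approximation order closes the induction.

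The argument is essentially routine bookkeeping; the only point requiring care is the inductive step of part (2), where the two distinct sources of monotonicity---in the argument tuple $\vv$ and in the frame $F$---must be separated and each matched to the correct earlier lemma, with the intermediate term $F_1(f)(\vw)$ serving as the hinge and transitivity of $\apprx$ gluing the pieces back together.
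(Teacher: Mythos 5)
Your proposal is correct and follows essentially the same route as the paper: part (1) is immediate from the definition, and part (2) is a structural induction whose inductive step factors $F_1(f)(\vv)\apprx F_2(f)(\vw)$ through an intermediate value using the expansion property (Lemma~\ref{lem:expansion property of maps}) and monotonicity of finitely generated maps (Lemma~\ref{lem:monontone maps}). The only cosmetic differences are that you change the arguments before the frame whereas the paper changes the frame first, and that you spell out the (correct) separate treatment of basic function symbols, which the paper leaves implicit.
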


\begin{proof}
  (1) follows immediately from the definition.
  
  We prove (2) by induction on $t$, showing that
  for $F \sqsubseteq F'$ and $\rho \sqsubseteq \rho'$,
  \begin{equation*}
    \evalfr{t} \sqsubseteq \eval{t}_{F', \rho'} \enspace.
  \end{equation*}
  If $t \equiv x$, the assertion holds because
  $\rho(x) \sqsubseteq \rho'(x)$.
  If $t \equiv f(t_1, \dots, t_n)$, we compute
  \begin{multline*}
    \evalfr{f(t_1, \ldots, t_n)}
    \ =\ F(f)(\evalfr{t_1}, \dots, \evalfr{t_n})\\
    \ \sqsubseteq\ F'(f)(\evalfr{t_1}, \dots, \evalfr{t_n})\\
    \ \sqsubseteq\ F'(f)(\eval{t_1}_{F', \rho'}, \dots,
    \eval{t_n}_{F', \rho'})
    \ =\ \eval{t}_{F', \rho'} \enspace,
  \end{multline*}
  where the first approximation uses the expansion property of maps,
  Lemma~\ref{lem:expansion property of maps}, and the second the
  induction hypothesis and monotonicity of maps,
  Lemma~\ref{lem:monontone maps}. 
\end{proof}

\begin{lemma}\label{lem:valuebound}
  Let $\rho$ be an assignment, $F$ a frame, and $t$ a term.
  Then
  \begin{equation*}
    \G(\evalfr{t})\ \leq\ \max(\G(\rho), \G(F)) + \lh(t)
    \enspace.
  \end{equation*}
\end{lemma}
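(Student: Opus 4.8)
The plan is to argue by structural induction on the term $t$, following the recursive definition of $\evalfr{t}$. The key structural observation guiding the bound is that the length $\lh(t)$ on the right-hand side only needs to absorb repeated applications of the basic successor symbols $\suc_0,\suc_1$, since these are the only operations that genuinely grow the gauge of a value; for every non-basic function symbol the gauge of the output is capped by $\G(F)$ irrespective of the input, so no further contribution to the bound is required there.

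For the base case $t\equiv x$, I would use $\evalfr{x}=\rho(x)$ and note that $\G(\rho(x))\le\G(\rho)$ when $x\in\dom(\rho)$, while $\G(\rho(x))=\G(*)=1$ otherwise. Since $\lh(x)=1$, both possibilities are dominated by $\max(\G(\rho),\G(F))+1$, giving the claim.

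For the inductive step $t\equiv f(t_1,\dots,t_n)$ I would split on whether $f$ is basic. If $f\equiv\eps$, then $\evalfr{t}=\eps$ has gauge $1\le\max(\G(\rho),\G(F))+\lh(t)$. If $f\equiv\suc_i$, then $\evalfr{t}=\suc_i(\evalfr{t_1})$, and since the recursion $\G(v0)=\G(v1)=\G(v)+1$ holds for every $v\in\Dom$ (including $v\equiv*$), we get $\G(\evalfr{t})=\G(\evalfr{t_1})+1$; applying the induction hypothesis to $t_1$ and using $\lh(t)=\lh(t_1)+1$ closes this case. The remaining case is $f\notin\cB$, where $\evalfr{t}=\Finite{f}(\vv)$ with $\vv=(\evalfr{t_1},\dots,\evalfr{t_n})$ and $\Finite{f}=F(f)$. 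Here $\Finite{f}(\vv)=\maxapprx\Finite{f}[\vv]$ is, by Lemma~\ref{lem:maxapprx}, either $*$ (when $\Finite{f}[\vv]=\emptyset$) or an actual element of $\Finite{f}[\vv]$; in the former case its gauge is $1$, and in the latter it is the right-hand side $v$ of some generator $\vw\mapsto v\in\Finite{f}$, so $\G(v)\le\G(\Finite{f})\le\G(F)$ by the definitions of the gauge measures. Either way $\G(\evalfr{t})\le\max(1,\G(F))\le\max(\G(\rho),\G(F))+\lh(t)$, since $\lh(t)\ge1$.

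The proof has no serious obstacle; the one thing to get right is the recognition, used in the non-basic case, that a finitely generated map never produces a value of gauge exceeding $\G(F)$, so that the input gauges $\G(\evalfr{t_i})$ play no role there and the induction hypothesis is in fact only needed for the $\suc_i$ case. A minor point to handle cleanly is the degenerate situation $\dom(F)=\emptyset$ or $\dom(\rho)=\emptyset$, where the relevant gauge is a maximum over the empty set; this is harmlessly absorbed by the $+\lh(t)\ge1$ slack on the right-hand side.
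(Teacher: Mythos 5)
Your proof is correct and follows essentially the same route as the paper's: structural induction on $t$ with the case split into variables, $\eps$, $\suc_i$, and non-basic $f$, using the induction hypothesis only in the $\suc_i$ case and the bound $\G(F(f)(\vv))\le\G(F)$ in the non-basic case. You merely spell out a few details the paper leaves implicit (the $x\notin\dom(\rho)$ subcase and the unpacking of $\maxapprx\Finite{f}[\vv]$), which is fine.
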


\begin{proof}
    By induction on $t$.
    If $t$ is a variable $x$, we have
    \[ \G(\evalfr{t})\ =\ \G(\rho(x))\ \le\ \G(\rho) \enspace. \]
    If $t$ is $\epsilon$ we compute
    $\G(\evalfr{t}) = 1 = \lh(t)$.
    For $t$ of the form $\suc_i(t_1)$ we have
    \begin{multline*}
      \G(\evalfr{t})\  =\ \G(\suc_i(\evalfr{t_1}))\ =\ \G(\evalfr{t_1})+1 \\
      \le\ \max(\G(\rho), \G(F)) + \lh(t_1) + 1
      \ =\ \max(\G(\rho), \G(F)) + \lh(t) \enspace .
    \end{multline*}

    Finally, assume $t$ is of the form $f(t_1, \ldots, t_n)$ with
    $f\notin\Basis$.
    Then we have $\G(\evalfr{t}) \leq \G(F)$.
\end{proof}

\begin{lemma}[Substitution Lemma]\label{lem:substitution}
  $\evalfr{t(u)} = \eval{t(x)}_{F, \rho[x \mapsto \evalfr{u}]}$
\end{lemma}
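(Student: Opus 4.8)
The plan is to prove the identity by structural induction on the term $t$, reading $t(x)$ as an arbitrary term $t$ in which we track a distinguished variable $x$, and $t(u)$ as the substitution instance $t[u/x]$. Thus the claim to be established is
\[
  \evalfr{t[u/x]} \ =\ \eval{t}_{F,\rho[x \mapsto \evalfr{u}]} \enspace.
\]
Since both sides are defined by the same recursion on term structure, I expect the induction to go through by direct unfolding, with the only genuine case distinction occurring at the variable leaves.

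First I would treat the base cases where $t$ is a variable. If $t\equiv x$, then $t[u/x]\equiv u$, so the left-hand side is $\evalfr{u}$, while the right-hand side is $\rho[x\mapsto\evalfr{u}](x)=\evalfr{u}$ by the definition of the updated assignment; the two sides agree. If $t\equiv y$ for a variable $y\not\equiv x$, then $t[u/x]\equiv y$, so the left-hand side is $\rho(y)$, and the right-hand side is $\rho[x\mapsto\evalfr{u}](y)=\rho(y)$, again by the definition of the update, since $y\not\equiv x$; so the two sides agree here as well.

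For the inductive step, suppose $t\equiv f(t_1,\dots,t_n)$ and that the claim holds for each $t_i$. Since substitution commutes with term formation, $t[u/x]\equiv f(t_1[u/x],\dots,t_n[u/x])$. Unfolding the definition of $\evalfr{\cdot}$ and then applying the induction hypothesis to each argument gives
\begin{align*}
  \evalfr{t[u/x]}
  &\ =\ F(f)\bigl(\evalfr{t_1[u/x]},\dots,\evalfr{t_n[u/x]}\bigr)\\
  &\ =\ F(f)\bigl(\eval{t_1}_{F,\rho[x\mapsto\evalfr{u}]},\dots,\eval{t_n}_{F,\rho[x\mapsto\evalfr{u}]}\bigr)\\
  &\ =\ \eval{t}_{F,\rho[x\mapsto\evalfr{u}]} \enspace,
\end{align*}
which closes the induction.

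The only point requiring attention is the bookkeeping in the variable base cases, namely keeping the distinguished variable $x$ separate from every other variable $y$. Everything else is a mechanical application of the recursive definition of evaluation together with the fact that substitution distributes over term formation, so I do not anticipate any real obstacle; in particular, no appeal to monotonicity or to the properties of $\apprx$ is needed, as the lemma is a purely definitional identity.
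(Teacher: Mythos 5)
Your proof is correct and follows exactly the route the paper takes (the paper simply states ``by induction on $t$''); your write-up is the natural unfolding of that induction, with the correct case split at the variable leaves and the mechanical step for $f(t_1,\dots,t_n)$. No issues.
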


\begin{proof}
    The proof is by induction on $t$.
\end{proof}

\section{Frame Models}\label{sec:FrameModels}

In this section we develop frames into models for equational theories based on nice axioms.


\begin{definition}[Model]
  A frame $F$ is a \emph{model of $\Ax$} iff
  for any $t = u$ in $\Ax$ and any assignment $\rho$,
  $\evalfr{t} \apprx \evalfr{u}$.
\end{definition}

\begin{remark}
  In general, the notion of being a model cannot be expressed as a bounded formula, thus will
  usually be in $\Pi_1$, but not in \pib1.
\end{remark}

We restrict the notion of being a model to obtain a bounded property.
Let $\kappa$ be a positive integer, which is intended to bound the gauge
of approximations occurring in frames and assignments
that need to be considered in the definition of models.
Furthermore, we restrict the definition to axioms to those occurring in a
particular derivation $\cD$.

\textbf{For the remainder of this section,
we assume that $\kappa$ and $\cD$ are fixed.}
With $\var(\cD)$ we denote the variables occurring in~$\cD$.

\begin{definition}[$\kappa$-Model]
  A frame $F$ is a \emph{$\kappa$-model of $\cD$} iff
  $\G(F)\le\kappa$, and
  for any $t = u$ in $\Ax$ occurring in $\cD$ and any assignment
  $\rho$ with $\dom(\rho)\subseteq\var(\cD)$ and $\G(\rho)\le\kappa$,
  we have $\evalfr{t} \apprx \evalfr{u}$.
\end{definition}

\begin{remark}
  The notion of $F$ being a $\kappa$-model of $\cD$ can be written as a \pib1 formula.
\end{remark}

\begin{lemma}\label{lem:existence model}
    The empty frame is a $\kappa$-model of $\cD$.
\end{lemma}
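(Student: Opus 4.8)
The plan is to verify the two defining conditions of a $\kappa$-model directly for the empty frame $F_\emptyset$, that is, the frame with $\dom(F_\emptyset)=\emptyset$. The first condition, $\G(F_\emptyset)\le\kappa$, is immediate: since $F_\emptyset$ has empty domain, the gauge $\G(F_\emptyset)=\max\emptyset$, which we take to be $0$ (or the minimal value), so the bound $\G(F_\emptyset)\le\kappa$ holds trivially as $\kappa$ is a positive integer.

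For the second condition, I would fix an arbitrary axiom $t=u$ in $\Ax$ occurring in $\cD$ and an arbitrary assignment $\rho$ with $\dom(\rho)\subseteq\var(\cD)$ and $\G(\rho)\le\kappa$, and show $\evalfr{t}\apprx\evalfr{u}$. The key observation is that for the empty frame, every non-basic function symbol evaluates to $\bot=\emptyset$, and hence $F_\emptyset(f)(\vv)=\bot(\vv)=*$ for every $f\notin\cB$ and every argument tuple. Recalling from the niceness of $\Ax$ that $t$ has the form $f(\vt)$ with $f\in\cF\setminus\cB$, I would compute $\evalfr{t}=F_\emptyset(f)(\evalfr{t_1},\dots,\evalfr{t_n})=*$. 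Since $*\apprx v$ for every $v\in\Dom$ by the definition of the approximation relation, we immediately get $\evalfr{t}=*\apprx\evalfr{u}$, regardless of the value of $\evalfr{u}$.

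Since the axiom $t=u$ and the assignment $\rho$ were arbitrary, both conditions of being a $\kappa$-model hold, completing the argument. There is essentially no obstacle here: the whole point is that the empty frame assigns the least approximate value $*$ to every defined-function application, and $*$ approximates everything, so the required approximation $\evalfr{t}\apprx\evalfr{u}$ holds vacuously on the left. The only minor care needed is to confirm that the outermost symbol of the left-hand side $t$ of every nice axiom is indeed a non-basic symbol — which is guaranteed by the definition of nice axioms, where each equation has left-hand side $f(\vx)$, $f(\eps,\vx)$, $f(x0,\vx)$, or $f(x1,\vx)$ for $f\in\cF\setminus\cB$ — so that its evaluation under $F_\emptyset$ is forced to be $*$.
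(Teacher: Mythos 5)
Your proof is correct and follows essentially the same approach as the paper's: since every nice axiom has left-hand side $f(\vt)$ with $f\in\cF\setminus\cB$, the empty frame forces $\evalfr{t}=*$, which approximates everything. Your additional explicit check of the gauge condition $\G(F)\le\kappa$ is a harmless (and welcome) bit of extra care that the paper leaves implicit.
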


\begin{proof}
  Let $F$ be the empty frame. 
  Consider $t=u$ in \Ax, and assignment~$\rho$.
  Then $t$ is of the form $f(\vt)$ for some $f$ in $\cF\setminus\cB$.
  We have $F(f)=\bot$, hence
  $\evalfr{f(\vt)} = * \apprx \evalfr{u}$.
\end{proof}

We will now define the notion of \emph{updates} that can be used to expand models based on axioms occurring in $\D$.

\begin{definition}[Updates]\label{def:updates}
  Let $F$ be a $\kappa$-model of $\cD$.
  An \emph{update based on $F$, $\kappa$ and $\cD$}
  is any $f\in\cF\setminus\cB$ and generator $\vv\mapsto w$,
  which we denote as $\updfvw$,
  such that $\G(\vv,w)\le\kappa$ and
  there exists $t=u$ in \Ax occurring in $\cD$
  and an assignment $\rho$
  satisfying that
  \begin{itemize}
  \item $t$ is of the form $f(\vt)$,
  \item $v_i=\rho(t_i)$  for $i\le\ar(f)$,
  \item and $w=\evalfr{u}$.
  \end{itemize}
  
  With $\FupdFfvw$ we denote the frame $F'$ given by
  \begin{align*}
    F'(g) &= F(g) \quad\text{if } g\neq f \\
    F'(f) &= F(f) \cup \{\vv\mapsto w\}
  \end{align*}
  
  The gauge of $\updfvw$, denoted $\G(\updfvw)$, is given by $\G(\vv,w)$, its extent, denoted $\E(\updfvw)$, by $\E(\vv)$.
\end{definition}


\begin{remark}
  The length of update $\updfvw$, $\lh(\updfvw)$, can be bounded by 
  \[ 
    \lh(\updfvw) \ =\ \Oh(\E(\updfvw) \cdot \G(\updfvw)) \enspace.
  \]
\end{remark}

\begin{remark}
  The arguments $\vt$ to $f$ above are generalized variables, as \Ax
  is nice.  
  Thus $\rho(t_i)$ is well-defined.
  Furthermore, in each term of $\vt$, at most one variable can occur,
  and such variables are distinct for different terms
  as \Ax is nice, as remarked before.
  Hence, an update uniquely determines an axiom in $\Ax$ and substitution 
  on which it is based.
\end{remark}


\begin{remark}
  For $F'=\FupdFfvw$ we compute 
  \begin{itemize}
    \item $\#(F') \le \#(F)+1$,
    \item $\W(F') \le \W(F)+1$,
    \item $\G(F') = \max\{\G(F),\G(\vv,w)\}$,
    \item $\E(F') = \max\{\E(F),\ar(f)\}$.
  \end{itemize}
\end{remark}

We now formulate and prove a crucial property of updates: 
They can be used to extend $\kappa$-models for $\D$.

\begin{proposition}[\Sot]\label{prop:model update}
  Let $F$ be a $\kappa$-model of $\cD$,
  $\updfvw$ an update based on  $F$, $\kappa$ and $\cD$, and
  $F' = \FupdFfvw$.
  Then $F'$ is a $\kappa$-model of $\cD$.
\end{proposition}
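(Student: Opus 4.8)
The plan is to show that after adding a single generator $\vv\mapsto w$ to $F(f)$, the resulting frame $F'$ still satisfies the two defining conditions of a $\kappa$-model: the gauge bound $\G(F')\le\kappa$, and the approximation property $\evalfrp{t'}\apprx\evalfrp{u'}$ for every axiom $t'=u'$ in $\Ax$ occurring in $\cD$ and every assignment $\rho'$ with $\dom(\rho')\subseteq\var(\cD)$ and $\G(\rho')\le\kappa$.

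First I would dispose of the gauge bound and the well-definedness of $F'$. By the remark following Definition~\ref{def:updates}, $\G(F')=\max\{\G(F),\G(\vv,w)\}$, and since $F$ is a $\kappa$-model we have $\G(F)\le\kappa$, while the update hypothesis gives $\G(\vv,w)\le\kappa$; hence $\G(F')\le\kappa$. Before this is meaningful, however, I must check that $F'(f)=F(f)\cup\{\vv\mapsto w\}$ is again a consistent set, so that $F'$ is genuinely a frame. This requires verifying the consistency condition of Definition~\ref{def:consis_set}: for every $\vx\mapsto y\in F(f)$ with $\vx\compat\vv$, we must have $y\compat w$. This is where the assumption that $F$ is a model is essential, and I expect it to be the technical heart of the argument (see below).

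Next I would turn to the approximation property, which splits by the monotonicity already available. Fix an axiom $t'=u'$ occurring in $\cD$ and an admissible $\rho'$. Since $F\forder F'$, monotonicity of evaluation in $F$ gives $\evalfr{t'}\apprx\evalfrp{t'}$ and $\evalfr{u'}\apprx\evalfrp{u'}$; and since $F$ is a $\kappa$-model, $\evalfr{t'}\apprx\evalfr{u'}$. The difficulty is that these facts do not immediately chain into $\evalfrp{t'}\apprx\evalfrp{u'}$, because adding the generator can only raise the value of a subterm headed by $f$ from $*$ to $w$, or leave values unchanged. So I would argue that the evaluation of $t'$ under $F'$ either coincides with its evaluation under $F$, or is increased at occurrences of $f$; in the former case the model property for $F$ finishes the job directly, and in the latter case I would use the compatibility packaged into consistency of $F'(f)$ together with Lemma~\ref{lem:compat} and the way $\maxapprx$ behaves on compatible sets to control how the new generator propagates through the evaluation.

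The main obstacle, as indicated, is proving consistency of $F'(f)$, i.e.\ that the new generator $\vv\mapsto w$ is compatible with every existing generator $\vx\mapsto y$ in $F(f)$ whenever $\vx\compat\vv$. Here I would exploit that, by Definition~\ref{def:updates}, the update arises from an axiom $t=u$ with $t\equiv f(\vt)$, with $v_i=\rho(t_i)$ and $w=\evalfr{u}$; each $t_i$ is a generalized variable. The strategy is to find a common upper bound (a single assignment, or a merged pair of arguments) witnessing that the two generators originate from compatible inputs, and then to invoke that $F$ is a model to compare their outputs. Concretely, I would take a common refinement of $\vv$ and $\vx$ (which exists by compatibility, using Lemma~\ref{lem:maxapprx}), lift it to an assignment $\rho''$ refining $\rho$, and use monotonicity of evaluation together with $\evalfr{t}\apprx\evalfr{u}$ to force $y$ and $w$ to approximate a common value $\evalfr[\rho'']{u}$, whence $y\compat w$ by Lemma~\ref{lem:compat}. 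Managing the interaction between the tuple-level refinement of arguments and the assignment-level refinement of variables in the generalized variables $\vt$ is the delicate bookkeeping step, and carrying it out provably in \Sot\ — keeping all gauges bounded by $\kappa$ throughout — is what I expect to demand the most care.
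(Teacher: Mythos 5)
Your overall architecture---check $\G(F')\le\kappa$, prove consistency of $F'(f)=F(f)\cup\{\vv\mapsto w\}$, then verify the approximation property for every axiom occurring in $\cD$---is exactly the paper's, and your plan for the consistency step is essentially the paper's argument: given $\vv'\mapsto w'\in F(f)$ with $\vv'\compat\vv$, form the componentwise join $\vy$ of $\vv$ and $\vv'$ (gauge still $\le\kappa$), lift it to an assignment $\hat\rho$ with $\hat\rho(t_i)=y_i$, and combine $w'\apprx F(f)(\vy)=\eval{t}_{F,\hat\rho}\apprx\eval{u}_{F,\hat\rho}$ with $w=\evalfr{u}\apprx\eval{u}_{F,\hat\rho}$ and Lemma~\ref{lem:compat}. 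That half is sound.

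The gap is in the approximation property. First, your worry about how the new generator ``propagates through the evaluation'' of $t'$ is misplaced: $t'$ is the left-hand side of a nice axiom, i.e.\ $g(\vt')$ with generalized-variable arguments containing no defined function symbols, so the only point at which $F'$ can differ from $F$ in evaluating $t'$ is the single outermost application of $g$; this observation is what reduces everything to one critical case. Second, and more seriously, in that critical case---$t'=u'$ is the very axiom $t=u$ underlying the update and $\vv\apprx\vy$ where $y_i=\rho'(t_i)$, so that $\evalfrp{t}=F'(f)(\vy)=\maxapprx\{w,\,F(f)(\vy)\}$---the tools you propose (consistency of $F'(f)$, Lemma~\ref{lem:compat}, and the behaviour of $\maxapprx$ on compatible sets) do not close the argument. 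Compatibility only yields $w\compat F(f)(\vy)$; if $w$ wins the $\maxapprx$, then knowing $F(f)(\vy)\apprx\eval{u}_{F,\rho'}$ tells you nothing about $w$ (consider $F(f)(\vy)=*$, $w=0$, $\eval{u}_{F,\rho'}=1$). What is actually needed is the direct estimate $w=\evalfr{u}=\eval{u}_{F,\rho\restriction_{\vx}}\apprx\eval{u}_{F,\rho'\restriction_{\vx}}=\eval{u}_{F,\rho'}$, where $\vx=\var(t)$ and $\rho\restriction_{\vx}\apprx\rho'\restriction_{\vx}$ is read off from $\vv=\rho(\vt)\apprx\rho'(\vt)$, using that $\var(u)\subseteq\var(t)$ for nice axioms. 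With both $w$ and $F(f)(\vy)$ individually below $\eval{u}_{F,\rho'}$, so is their $\maxapprx$, and monotonicity in $F\forder F'$ finishes. This direct comparison of $\rho$ with $\rho'$ is the missing idea in your sketch.
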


\begin{proof}
  We argue in \Sot.
  Let the assumption of the proposition be given, and assume that
  $\updfvw$ is given via $t=u$ in $\Ax$ and assignment $\rho$, where
  $t$ is of the form $f(\vt)$ and $\vv=\rho(\vt)$.
  W.l.o.g., $\dom(\rho)=\var(t)$.
  We have $\G(\rho)\le\kappa$.
  
  In order to show that $F'=\FupdFfvw$ is a $\kappa$-model for $\cD$,
  it suffices to show that
  \begin{enumerate}
  \item\label{Fupd1}
    $F'(f)$ is a consistent set, and
  \item\label{Fupd2}
    for any $t'=u'$ in \Ax occurring in $\cD$, and any assignment
    $\rho'$ with $\G(\rho')\le\kappa$, 
    we have $\evalfrp{t'}\apprx\evalfrp{u'}$.
  \end{enumerate}

  For \eqref{Fupd1}, consider $\vv'\mapsto w' \in F(f)$ such that
  $\vv' \compat \vv$.
  Then there exists $\vy$ such that $\vv,\vv'\apprx\vy$ and
  $\G(\vy)\le\kappa$ -- we can choose $y_i$ to be $\maxapprx\{v_i, v'_i\}$, hence $\G(y_i)\le\kappa$ follows from assumption $\G(\vv_i), \G(\vv'_i) \leq \kappa$.
  Choose $\hat\rho$ with $\dom(\hat\rho)=\var(t)$
  such that $y_i=\hat\rho(t_i)$,
  which is possible since $\vv\apprx\vy$.
  We observe that $\rho\apprx\hat\rho$ and that $\G(\hat\rho)\le\kappa$.

  Let $S$ be $F(f)[\vy]$, that is
  \[
    S \quad=\quad \{ \tilde{w} \mid \exists \tilde{v}, \tilde{v} \apprx \vy
       \text{ and } \tilde{v} \mapsto \tilde{w} \in F(f) \} \enspace.
  \]
  We have $w'\in S$ as $\vv'\apprx \vy$, hence
  \[
    w' \ \apprx\ \maxapprx S
    \ =\ F(f)(\vy) = \eval{t}_{F,\hat\rho} \\
    \ \apprx\ \eval{u}_{F,\hat\rho}
  \]
  as $F$ is a $\kappa$-model of $\cD$.
  Furthermore,
  \[ w \ =\  \evalfr{u}
    \ \apprx\ \eval{u}_{F,\hat\rho}
  \]
  as $\rho\apprx\hat\rho$.
  Hence $w\compat w'$ using Lemma~\ref{lem:compat}.

  For \eqref{Fupd2}, let  $t'=u'$ be in \Ax occurring in $\D$,
  and $\rho'$ be an assignment with $\G(\rho')\le\kappa$.
  If $t'=u'$ is not identical to $t=u$, then the assertion follows
  from $F$ being a $\kappa$-model of $\cD$:
  \Ax being nice implies $\evalfrp{t'}=\eval{t'}_{F,\rho'}$ in this case, hence
  \[
    \evalfrp{t'} \ =\ \eval{t'}_{F,\rho'}
    \ \apprx\ \eval{u'}_{F,\rho'} 
    \ \apprx\ \evalfrp{u'}
  \]
  as $F$ is a $\kappa$-model of $\cD$, and $F\forder F'$.

  Now consider $t'=u'$ being identical to $t=u$.
  Let $y_i$ be $\rho'(t_i)$.
  If $\vv\not\apprx\vy$, then again
  $\evalfrp{t'}=\eval{t'}_{F,\rho'}$ and the assertion follows from
  $F$ being a $\kappa$-model of $\cD$ as before.

  So assume $\vv\apprx\vy$.
  Let $\vx$ be the list of variables occuring in $t$, then we have
  $\rho{\restriction}_{\vx} \apprx \rho'{\restriction}_{\vx}$.
  We compute
  \[
    F'(f)(\vy) \ =\ \maxapprx F'(f)[\vy]
    \ =\  \maxapprx (\{w\}\cup F(f)[\vy])
    \ =\  \maxapprx \{ w, F(f)(\vy)\} \enspace.
  \]
  We consider $w$ and $F(f)(\vy)$ in turns:
  For $F(f)(\vy)$ we  have
  \[
    F(f)(\vy) \ =\ \eval{t}_{F,\rho'}
    \ \apprx\  \eval{u}_{F,\rho'}
  \]
  as $F$ is a $\kappa$-model of $\cD$.
  In case of $w$ we have,
  \[
    w \ =\  \evalfr{u}
    \ =\  \eval{u}_{F,\rho\restriction_{\vx}}
    \ \apprx\  \eval{u}_{F,\rho'\restriction_{\vx}}
    \ =\  \eval{u}_{F,\rho'} 
  \]
  using
  $\rho{\restriction}_{\vx} \apprx \rho'{\restriction}_{\vx}$.
  Hence $F'(f)(\vy) \apprx \eval{u}_{F,\rho'}$.
  Thus
  \[ \evalfrp{t} \ =\ F'(f)(\vy)
    \ \apprx\  \eval{u}_{F,\rho'}
    \ \apprx\  \evalfrp{u} \]
  as $F\forder F'$.
\end{proof}

\newcommand{\updfvwsub}[1]{\update{f_{#1}}{\vv_{#1}}{w_{#1}}}
\begin{definition}
  A \emph{sequence of updates based on $F$, $\kappa$ and $\cD$} is a
  sequence $\sigma$ of the form
  \[ \Seq{\, \updfvwsub1 \,,\dots,\, \updfvwsub{\ell} \,}
  \]
  such that for
  \begin{align*}
    F_0 &\ :=\ F\\
    F_{i+1} &\ :=\ F_i \updcons \updfvwsub{i+1}
  \end{align*}
  we have that
  \[
    \updfvwsub{i+1} \quad \text{is an update based on $F_i$,
      $\kappa$ and $\cD$.}
  \]
  
  Let ${F} \updcons {\sigma}$ denote $F_\ell$.
  The \emph{sequence length of $\sigma$}, denoted $\seqlh(\sigma)$, is given by~$\ell$.
  The gauge of $\si$ is given by $\G(\si)=\max\{\G(\vv_1,w_1),\dots,\G(\vv_\ell,w_\ell)\}$,
  its extend by $\E(\si)=\max\{\ar(f_1),\dots,\ar(f_\ell)\}$.
\end{definition}

\begin{remark}
  The length of $\si$, $\lh(\si)$, can be bounded by $\lh(\si)=\Oh(\E(\si)\cdot\G(\si)\cdot\seqlh(\si))$.
\end{remark}

\begin{remark}
  For $F'=F\updcons\sigma$ we compute 
  \begin{itemize}
    \item $\#(F') \le \#(F)+\seqlh(\si)$,
    \item $\W(F') \le \W(F)+\seqlh(\si)$,
    \item $\G(F') = \max\{\G(F),\G(\si)\}$,
    \item $\E(F') = \max\{\E(F),\E(\si)\}$.
  \end{itemize}
\end{remark}

\begin{corollary}[\Sot]\label{cor:models}
  Assuming the notions given by the previous definition, 
  all $F_i$'s are $\kappa$-models of $\cD$, for $i\le\ell$.
\end{corollary}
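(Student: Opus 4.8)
The plan is to derive Corollary~\ref{cor:models} as a straightforward induction on $i$, using Proposition~\ref{prop:model update} as the inductive engine. The claim is that every $F_i$ in the sequence $F_0, F_1, \dots, F_\ell$ is a $\kappa$-model of $\cD$, where $F_0 = F$ and $F_{i+1} = F_i \updcons \updfvwsub{i+1}$. The base case is immediate: $F_0 = F$ is a $\kappa$-model of $\cD$ by the hypothesis of the corollary. For the induction step, I would assume $F_i$ is a $\kappa$-model of $\cD$ and show the same for $F_{i+1}$. By the definition of a sequence of updates, $\updfvwsub{i+1}$ is an update based on $F_i$, $\kappa$ and $\cD$; since $F_i$ is a $\kappa$-model of $\cD$, Proposition~\ref{prop:model update} applies directly and yields that $F_{i+1} = F_i \updcons \updfvwsub{i+1}$ is again a $\kappa$-model of $\cD$. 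This closes the induction, and since $\ell$ is the sequence length, we conclude all $F_i$ for $i \le \ell$ are $\kappa$-models.

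The one genuine subtlety, and the reason the corollary is explicitly tagged \Sot, is that the induction must be carried out \emph{inside} \Sot rather than in the metatheory. So the real work is checking that the induction formula has acceptable bounded quantifier complexity and that the induction length is feasible. The predicate ``$F_i$ is a $\kappa$-model of $\cD$'' is \pib1 as noted in the remark following Definition of $\kappa$-model, and the statement I am inducting on is essentially ``for all $i \le \ell$, $F_i$ is a $\kappa$-model of $\cD$.'' I would phrase the induction hypothesis as a $\pib1$ formula in the variable $i$ (with $\sigma$, $F$, $\kappa$, $\cD$ as parameters), so that Theorem~\ref{thm:Pi-LIND} supplies $\lind{\pib1}$ in \Sot --- provided the induction ranges only over logarithmically many steps, i.e.\ up to $|\text{something}|$. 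Here $\ell = \seqlh(\sigma)$, and since $\sigma$ is a given object with $\ell \le \lh(\sigma)$, ordinary $\pib1$ induction up to $\ell$ is what is needed; this requires care because \Sot only proves the logarithmically-restricted form $\lind{\pib1}$.

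The main obstacle, therefore, is reconciling the linear induction length $\ell$ with the fact that \Sot proves only \emph{logarithmic} $\pib1$ induction. The resolution I would adopt is that $\sigma$ is encoded as a sequence and $\ell$ is bounded by the length $\lh(\sigma)$ of that encoding, so $\ell \le |2^\sigma|$ or similar, placing the induction bound at a logarithm of a feasible quantity and bringing it within reach of $\lind{\pib1}$. Concretely, I would observe that the partial frames $F_i = F \updcons \Seq{\updfvwsub1, \dots, \updfvwsub{i}}$ are all uniformly definable from $\sigma$, $F$ and $i$ by an \Sot-definable function (prefix-application of the update sequence), so the formula ``$F \updcons (\text{first } i \text{ updates of } \sigma) \text{ is a } \kappa\text{-model of } \cD$'' is a single \pib1 formula in $i$, and I apply $\lind{\pib1}$ to it with $i$ ranging up to $\ell$. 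I expect the rest to be routine bookkeeping: verifying that the prefix-application function is total and \Sot-definable, that applying one more update corresponds to the recursion $F_{i+1} = F_i \updcons \updfvwsub{i+1}$, and that each individual step invokes Proposition~\ref{prop:model update} with the correct parameters satisfying the gauge bound $\G(\vv_{i+1}, w_{i+1}) \le \kappa$ guaranteed by $\G(\sigma) \le \kappa$.
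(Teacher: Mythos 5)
Your proof takes essentially the same approach as the paper, whose entire argument is ``induction on $i\le\ell$ using Proposition~\ref{prop:model update}.'' Your additional discussion of why the induction is available in \Sot---phrasing the induction formula as a single \pib1 property of $i$ with $\si$, $F$, $\kappa$, $\cD$ as parameters, and noting that $\ell=\seqlh(\si)$ is bounded by the length of the encoding of $\si$ so that $\lind{\pib1}$ suffices---is a correct elaboration of what the paper leaves implicit.
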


\begin{proof}
  The proof is by induction on $i\le\ell$ using Proposition~\ref{prop:model update}.
\end{proof}

\section{Soundness in \texorpdfstring{\Stt}{S22}}\label{sec:SoundnessInS22}

We now prove a soundness property for equational reasoning using approximation semantics.  
The proof will be formalizable in \Stt. 
This will be improved in the remaining sections to a proof formalizable in \Sot by introducing an additional property.
To keep the exposition clearer, we first prove soundness based on the notions introduced so far.

\begin{theorem}[\Stt]\label{thm:soundness in S22}
  Assume $\D \vdash t = u$ is in Variable Normal Form.
  Let $\rho$ be an assignment,
  and $F$ a model for~\Ax.
  Let $\kappa$ be $\max\{\G(F),\G(\rho)\}+\lh(\cD)$.
  Then there are sequences $\sigma_1$ and $\sigma_2$ of updates
  based on $F$, $\kappa$ and $\cD$ 
  such that
  \begin{align*}
    \evalfr{t} &\ \apprx\ \eval{u}_{F\updcons \sigma_1, \rho} \\
    \evalfr{u} &\ \apprx\ \eval{t}_{F\updcons \sigma_2, \rho}
  \end{align*}
\end{theorem}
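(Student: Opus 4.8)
The plan is to prove this by induction on the structure of the derivation $\cD$, following the inductive definition of derivations given in Definition~\ref{def:derivations}. Since the statement is symmetric in $t$ and $u$ (swapping them exchanges the roles of $\sigma_1$ and $\sigma_2$), it suffices to produce $\sigma_1$ with $\evalfr{t} \apprx \eval{u}_{F\updcons\sigma_1,\rho}$, and then obtain $\sigma_2$ by applying the same argument to the derivation $\cD$ read with $t$ and $u$ interchanged (using that Symmetry is a rule, so a derivation of $t=u$ yields one of $u=t$ of comparable length). The central quantity to control is the gauge bound $\kappa = \max\{\G(F),\G(\rho)\}+\lh(\cD)$: I must check at each step that every assignment and frame entering an update stays within gauge $\kappa$, which is exactly where Lemma~\ref{lem:valuebound} is used, since it bounds $\G(\evalfr{s})$ by $\max(\G(\rho),\G(F))+\lh(s)$ and every subterm $s$ appearing has $\lh(s)\le\lh(\cD)$.

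The base cases are Axiom and Reflexivity. For Reflexivity $t=t$ we take $\sigma_1$ empty and use reflexivity of $\apprx$. For an Axiom $\vdash t=u$, which in Variable Normal Form has $t\equiv f(\vt)$ with the $t_i$ generalized variables, I would build a single update $\update{f}{\vv}{w}$ where $\vv = \rho(\vt)$ and $w = \evalfr{u}$. This is a legitimate update based on $F$, $\kappa$ and $\cD$ precisely because $\G(\vv,w)\le\kappa$ by Lemma~\ref{lem:valuebound}, and then by the definition of the updated frame $F' = F\updcons\update{f}{\vv}{w}$ one computes $\evalfr{f(\vt)} = F'(f)(\vv) \apprx w = \evalfr{u} = \eval{u}_{F',\rho}$, using monotonicity (Lemma~\ref{lem:monontone maps}) and that $u$ contains no new function symbol whose evaluation changed. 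The inductive steps for Symmetry, Transitivity, Compatibility and Substitution each combine the sequences obtained from the subderivations. The key technical device is that sequences of updates compose: given $\sigma$ and $\sigma'$ based on the appropriate frames, their concatenation is again a sequence of updates (by the expansion property, Lemma~\ref{lem:expansion property of maps}, and Corollary~\ref{cor:models} all intermediate frames remain $\kappa$-models), and evaluation is monotone in the frame, so approximations proved against $F\updcons\sigma$ persist against $F\updcons\sigma\updcons\sigma'$.

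For Transitivity, from $t=s$ and $s=u$ the induction gives $\sigma_a$ with $\evalfr{t}\apprx\eval{s}_{F\updcons\sigma_a,\rho}$ and $\sigma_b$ with $\evalfr{s}\apprx\eval{u}_{F\updcons\sigma_b,\rho}$; I concatenate to $\sigma_1 = \sigma_a\seqcons\sigma_b'$ (with $\sigma_b'$ reindexed to be based on $F\updcons\sigma_a$), lift both approximations to the common frame by monotonicity, and chain them by transitivity of $\apprx$. For Compatibility $s[t/x]=s[u/x]$ I would use the Substitution Lemma (Lemma~\ref{lem:substitution}) to reduce the evaluation of $s[t/x]$ to evaluating $s$ under $\rho[x\mapsto\evalfr{t}]$, apply the inductive hypothesis to the premise $t=u$, and then invoke monotonicity of $\evalfr{s}$ in the assignment argument. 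For Substitution $t[r/x]=u[r/x]$ I argue similarly, again via the Substitution Lemma, evaluating at the assignment updated on $x$; the Variable Normal Form hypothesis is what guarantees the bound variable $x$ behaves cleanly and does not collide elsewhere.

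The main obstacle I expect is the bookkeeping on gauges: I must verify that at every node the assignment $\rho$ (and the modified assignments $\rho[x\mapsto\evalfr{r}]$ arising in the Compatibility and Substitution steps) still satisfies $\G\le\kappa$, so that the updates remain valid updates based on $\kappa$. This is exactly why $\kappa$ was chosen as $\max\{\G(F),\G(\rho)\}+\lh(\cD)$ rather than something tighter: Lemma~\ref{lem:valuebound} shows that any intermediate value $\evalfr{r}$ substituted for a variable has gauge at most $\max(\G(\rho),\G(F))+\lh(r)\le\kappa$, so the enlarged assignments stay within budget uniformly along the whole derivation. Confirming that this single global $\kappa$ suffices at every node, and that the concatenated sequences never force a gauge beyond $\kappa$, is the delicate part; the purely logical chaining via transitivity and monotonicity is then routine. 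I note that the induction here is on the full tree structure and the assertion quantifies over the constructed sequences, which is why this argument formalizes in \Stt\ rather than \Sot\ — reducing this complexity is deferred to the later sections.
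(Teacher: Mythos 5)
Your overall plan---structural induction over the derivation, one update per Axiom node built from $\rho$ and $\evalfr{u}$, concatenation of update sequences at Transitivity, and the Substitution Lemma at Compatibility and Substitution---matches the paper's. But there are two genuine problems. First, the reduction to ``one direction suffices by symmetry'' is circular: if a (sub)derivation ends in Symmetry with premise $u=t$, then establishing $\evalfr{t}\apprx\eval{u}_{F\updcons\si_1,\rho}$ requires precisely the \emph{other} direction of the induction hypothesis for the subderivation, so both directions must be carried simultaneously through the induction (the paper produces $\si_1$ and $\si_2$ at every node and swaps them at Symmetry). The two directions are genuinely asymmetric at the Axiom node: for $\evalfr{t}\apprx\eval{u}_{F\updcons\si_1,\rho}$ one takes $\si_1=\Seq{}$ and uses only that $F$ is a model; the update $\updfvw$ with $w=\evalfr{u}$ is needed for the reverse direction $\evalfr{u}\apprx\eval{t}_{F\updcons\si_2,\rho}$, where $\eval{t}$ may be $*$ and must be pumped up. Your chain $\evalfr{f(\vt)}=F'(f)(\vv)\apprx w$ is wrong on both counts: $\evalfr{\cdot}$ evaluates under $F$, not $F'$, and $F'(f)(\vv)=\maxapprx(\{w\}\cup F(f)[\vv])$ is an $\apprx$-upper bound of $w$, so the correct relation is $w\apprx F'(f)(\vv)$.

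Second, to obtain an \Stt{} proof you cannot induct on the tree structure with an unbounded existential over $\si_1,\si_2$; the paper replaces the theorem by a strengthened claim proved by LIND on $\lh(\cD_0)$ for a $\pib2$ formula, which (i) takes as an extra input an already-accumulated sequence $\si$ of updates subject to the budget $\G(\rho),\G(\si),\E(\si),\seqlh(\si)\le U-\lh(\cD_0)$, and (ii) asserts explicit output bounds $\seqlh(\si_i),\E(\si_i)\le\lh(\cD_0)$ and $\G(\si_i)\le\max\{\G(F),\G(\si),\G(\rho)\}+\lh(\cD_0)$. The input parameter $\si$ is not optional bookkeeping: in the Transitivity case the hypothesis for $\cD_2$ must be applied relative to the frame already extended by the updates produced for $\cD_1$, and that frame is only a $\kappa$-model of $\cD$, not a model of $\Ax$, so you cannot simply re-invoke the theorem with $F\updcons\si_a$ as a new base frame, as your ``reindexing'' suggests. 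Your gauge discussion identifies the right danger, but without the decreasing budget $U-\lh(\cD_0)$ and the explicit output bounds the induction formula is not bounded and LIND does not apply.
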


To prove the previous theorem, we consider the following more
general claim.

\begin{claim}[\Stt]\label{claim:soundness in S22}
  Fix some derivation $\cD$ in Variable Normal Form, 
  some model $F$ for~\Ax,
  and some integer $U$ such that $\G(F)+\lh(\cD)\le U$.

  Let $\D_0 \vdash t = u$ be a sub-derivation of $\D$.
  Let $\rho$ be an assignment,
  and $\si$ a sequence of updates based on $F$, $U$, $\cD$,
  satisfying
  \begin{align*}
    \dom(\rho)\ &\subseteq\ \var(\cD) \\
    \G(\rho), \G(\si), \E(\si), \seqlh(\si)\ &\le\ U - \lh(\cD_0)
  \end{align*}
  
  Then there are sequences $\si_1$ and $\si_2$ of updates based on
  $F$, $U$, $\cD$ with
  \begin{align*}
    \E(\si_i), \seqlh(\si_i)\ &\leq\ \lh(\D_0)\\
    \G(\si_i)\ &\leq\ \max\{\G(F),\G(\si),\G(\rho)\} + \lh(\cD_0)
  \end{align*}
  such that
  \begin{align*}
    \evalfr{t} &\ \apprx\ \eval{u}_{F' \updcons \sigma_1, \rho} \\
    \evalfr{u} &\ \apprx\ \eval{t}_{F' \updcons \sigma_2, \rho}
  \end{align*}
  for $F' = F\updcons \si$.
\end{claim}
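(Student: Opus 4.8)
The plan is to prove Claim~\ref{claim:soundness in S22} by induction on the structure of the sub-derivation $\cD_0$, following the inductive definition of derivations given in Definition~\ref{def:derivations}. Since the claim asserts the existence of update sequences $\si_1$ and $\si_2$ witnessing approximations in both directions ($\evalfr{t}\apprx\eval{u}_{\cdots}$ and $\evalfr{u}\apprx\eval{t}_{\cdots}$), and since the rules of equational reasoning are closed under symmetry, I expect the two directions to be handled symmetrically; I would prove the first and note the second is analogous by the symmetric roles of $t$ and $u$. The key design feature to respect throughout is the bookkeeping on the measures $\E(\si_i)$, $\seqlh(\si_i)$, and $\G(\si_i)$: each recursive call must produce update sequences whose length and extent are bounded by $\lh(\cD_0)$ and whose gauge grows by at most $\lh(\cD_0)$ over the inputs, so that concatenations at the Transitivity step add up correctly.

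I would organize the induction by cases on the final rule of $\cD_0$. For \textbf{Reflexivity} ($t\equiv u$), take $\si_1=\si_2=\Seq{}$ empty, and the approximation is just reflexivity of $\apprx$. For \textbf{Axiom} $\vdash t=u$ (in Variable Normal Form, so $t\equiv f(\vt)$ with $\vt$ generalized variables), the witnessing update is exactly the single generator $\updfvw$ with $v_i=\eval{t_i}_{F\updcons\si,\rho}$ and $w=\eval{u}_{F\updcons\si,\rho}$; one checks this is a legitimate update based on $F\updcons\si$, $U$, and $\cD$ using Lemma~\ref{lem:valuebound} to bound its gauge, and then $\eval{t}_{(F\updcons\si)\updcons\Seq{\updfvw},\rho}$ picks up the value $w$ so that $\eval{t}\apprx\eval{u}$ holds by construction. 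For \textbf{Symmetry}, swap the roles of the two witness sequences from the inductive hypothesis. For \textbf{Transitivity} with premises $t=s$ and $s=u$, I would apply the IH to each premise, obtaining sequences for each, and \emph{concatenate} them; here the expansion property (Lemma~\ref{lem:expansion property of maps}) and monotonicity of evaluation are essential to transport an approximation established over $F\updcons\si_1'$ up to the larger frame $F\updcons\si_1'\updcons\si_2'$, letting the two inequalities chain together. The \textbf{Compatibility} and \textbf{Substitution} cases invoke the Substitution Lemma (Lemma~\ref{lem:substitution}) to reduce evaluation of $s[t/x]$ or $t[s/x]$ to evaluation under an updated assignment, then apply the IH with the modified assignment.

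The main obstacle, and the reason the exponent and the parameter $U$ are tracked so carefully, is the \textbf{measure accounting in the Transitivity case}, combined with ensuring all intermediate assignments and updates stay within the gauge bound $U$. When concatenating $\si_1'$ and $\si_2'$ I must verify that the combined sequence is still a valid sequence of updates based on $F$, $U$, $\cD$ — each later update must remain legitimate relative to the frame produced by all earlier ones, which is where Proposition~\ref{prop:model update} (updates preserve being a $\kappa$-model) is used implicitly via Corollary~\ref{cor:models}, and where the hypothesis $\G(\rho),\G(\si),\E(\si),\seqlh(\si)\le U-\lh(\cD_0)$ guarantees enough headroom so that the gauge of freshly created generators, bounded via Lemma~\ref{lem:valuebound} by $\max\{\G(F),\G(\si),\G(\rho)\}+\lh(\cD_0)\le U$, never exceeds $U$. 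The Compatibility and Substitution cases pose a secondary subtlety: the auxiliary assignment $\rho[x\mapsto\eval{\cdot}]$ introduced via the Substitution Lemma must itself satisfy the gauge constraint, again bounded through Lemma~\ref{lem:valuebound}, and in the Substitution case the Variable Normal Form property (Proposition~\ref{prop:VNF}, part~\eqref{prop:VNF2}) ensures the bound variable behaves cleanly so that $\dom(\rho)\subseteq\var(\cD)$ is maintained. Once these invariants are verified inductively, Theorem~\ref{thm:soundness in S22} follows as the special case where $\cD_0=\cD$, $\si=\Seq{}$, and $U=\kappa$.
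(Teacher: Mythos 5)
Your overall strategy coincides with the paper's: a case analysis on the last rule of $\cD_0$, with the Axiom case supplying the single new generator $\updfvw$, Symmetry swapping the two witnesses, Compatibility and Substitution going through the Substitution Lemma with a modified assignment whose gauge is controlled via Lemma~\ref{lem:valuebound}, and the headroom hypothesis $\G(\rho),\G(\si),\E(\si),\seqlh(\si)\le U-\lh(\cD_0)$ paying for everything. However, the \textbf{Transitivity} case as you describe it has a genuine gap. You cannot apply the induction hypothesis to the two premises independently (both relative to the same $\si$) and then repair with monotonicity: the first application gives $\evalfr{t}\apprx\eval{s}_{F'\updcons\si^1_1,\rho}$, while a parallel second application only gives $\evalfr{s}\apprx\eval{u}_{F'\updcons\si^2_1,\rho}$, and an upper bound on $\evalfr{s}$ says nothing about the possibly strictly larger value $\eval{s}_{F'\updcons\si^1_1,\rho}$ --- monotonicity and the expansion property only move the \emph{right-hand} sides of approximations upward, never the left-hand sides. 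The paper instead invokes the i.h.\ for $\cD_2$ with the already-extended sequence $\si\seqcons\si^1_1$ in place of $\si$, so that the resulting approximation reads $\eval{s}_{F'\updcons\si^1_1,\rho}\apprx\eval{u}_{F'\updcons\si^1_1\updcons\si^2_1,\rho}$ and chains by transitivity of $\apprx$. Checking that $\si\seqcons\si^1_1$ still satisfies the hypotheses with bound $U-\lh(\cD_2)$ (using $\lh(\cD_0)>\lh(\cD_1)+\lh(\cD_2)$) is precisely the measure bookkeeping you allude to, but its role is to \emph{license this nested application of the i.h.}, not merely to keep the concatenated sequence a legal sequence of updates.

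Two further points. Since the claim asserts provability in \Stt, the induction should be organized as logarithmic induction on $\lh(\cD_0)$ applied to a $\pib2$ statement (Theorem~\ref{thm:Pi-LIND}); structural induction over derivation trees is not a primitive of Bounded Arithmetic, and the reduction to \lind{\pib2} is the reason the claim quantifies over sub-derivations of a fixed $\cD$ rather than over arbitrary derivations. Finally, a minor imprecision: in the Axiom case the two directions are not symmetric. The direction $\evalfr{t}\apprx\eval{u}_{F',\rho}$ already holds with $\si_1=\Seq{}$ because $F'=F\updcons\si$ is a $U$-model of $\cD$; only the converse direction $\evalfr{u}\apprx\eval{t}_{F'\updcons\si_2,\rho}$ needs the update $\updfvw$, which forces $F''(f)(\vv)$ to dominate $w=\eval{u}_{F',\rho}$. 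Your version, which attaches the update in both directions, still satisfies the stated bounds and so is harmless, but the claimed ``symmetric roles of $t$ and $u$'' do not hold case by case.
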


The Theorem follows from the Claim by letting $\cD_0=\cD$,
$\rho$ as given,
$\si=\Seq{}$, and $U=\max\{\G(F),\G(\rho)\}+\lh(\cD)$.

\begin{proof}[Proof of Claim~\ref{claim:soundness in S22}]
  We argue in \Stt.
  Let $\cD$, $F$ and $U$
  be given as in the Claim.
  We prove that for any $\cD_0$, $\rho$, $\si$ satisfying the
  conditions of the Claim, there are $\si_1$ and $\si_2$ satisfying the
  assertion of the Claim, by induction on $\lh(\cD_0)$.
  Thus this is proven by logarithmic induction (LIND) on a $\pib2$-property, which is
  available in \Stt by Theorem~\ref{thm:Pi-LIND}.

  Let $\cD_0$, $\rho$, $\si$ be given, that are 
  satisfying the conditions in the Claim.
  Let $F'$ be $F\updcons\si$. then
  $F'$ is a $U$-model of $\cD$ by Corollary~\ref{cor:models}.

  We now consider cases according to the last rule applied
  in~$\cD_0$.
  If that is the \textbf{Reflexivity Rule} $\vdash t=t$, we can choose
  $\si_1=\si_2=\Seq{}$ to satisfy the assertion of the Claim.

  \bigskip

  \paragraph{\bf Axiom Rule:}
  More interesting is the case of Axiom Rule $\vdash t=u$.
  As $\D$ is in Variable Normal Form we have that $t=u$ is an injective renaming of an equation in \Ax.
  W.l.o.g.\ we can assume that $t=u$ is in \Ax, as renamings of
  variables would make no difference to the following argument.
  As \Ax is nice, we have that $t$ is of the form $f(\vt)$ for some
  $f\in\cF\setminus\cB$ and generalized variables $\vt$.
  Let $v_i$ be $\rho(t_i)$ and $w=\eval{u}_{F',\rho}$.
  We compute $\G(v_i)\le\G(\rho)+1$
  and, using Lemma~\ref{lem:valuebound},
  \[
    \G(w)
    \ \le\ \max\{\G(\rho),\G(F')\} + \lh(u)
    \ <\ \max\{\G(\rho),\G(F),\G(\si)\} + \lh(\cD_0) \enspace.
  \]
  
  Let $\si_1=\Seq{}$ and $\si_2=\Seq{\updfvw}$, then
  \begin{align*}
  \E(\si_i)\ &\le\ \ar(f)\ <\ \lh(\cD_0) \enspace,\\
  \seqlh(\si_i)\ &\le\ 1\ <\ \lh(\cD_0) \enspace,\text{ and}\\
  \G(\si_i)\ &\le\ \max\{\G(\si),\G(\rho),\G(F)\} + \lh(\cD_0)\enspace.
  \end{align*}
  Furthermore, $\eval{t}_{F',\rho}\apprx\eval{u}_{F',\rho}$
  as $F'$ is $U$-model of $\cD$, which proves the assertion for
  $\si_1$.
  For $\si_2$, let $F''$ be $F'\updcons\si_2$, then we have
  \begin{align*}
    \eval{u}_{F',\rho}\ =\ w\ \apprx\ \maxapprx F''(f)[\vv]\ &=\ F''(f)(\vv) \\
    &=\  F''(f)(\dots,\rho(t_i),\dots)
    \ =\ \eval{t}_{F'',\rho}  \enspace.
  \end{align*}

  \bigskip
  
  \paragraph{\bf Symmetry Rule:}
  For the case of Symmetry Rule, let $\cD_1$ be the sub-derivation of
  $\cD_0$ ending in $u=t$.
  By induction hypothesis we obtain $\si'_1$ and $\si'_2$ satisfying
  the assertion for $\cD_1$.
  By choosing $\si_1=\si'_2$ and $\si_2=\si'_1$ we immediately fulfill
  the assertion for $\cD_0$.

  \bigskip
  
  \paragraph{\bf Transitivity Rule:}
  If $\cD_0$ ends with an application of the Transitivity Rule, it
  must be of the form
    \begin{prooftree}
    \AxiomC{$\cD_1$}
    \noLine
    \UnaryInfC{$t=s$}
    \AxiomC{$\cD_2$}
    \noLine
    \UnaryInfC{$s=u$}
    \BinaryInfC{$t=u$}
  \end{prooftree}
  By induction hypothesis applied to $\cD_1$, $\rho$ and $\si$,
  we obtain some $\si^1_1$ satisfying
  \begin{align*}
    \E(\si^1_1), \seqlh(\si^1_1)\ &\le\ \lh(\cD_1) \enspace, \\
    \G(\si^1_1)\ &\le\ \max\{\G(\si),\G(\rho),\G(F)\} + \lh(\cD_1)
                   \enspace, \text{ and } \\
    \evalfr{t}\ &\apprx\ \evalr{s}{F^1_1}
                  \ \text{ for } F^1_1=F'\updcons \si^1_1\enspace.
  \end{align*}
  We compute
  \begin{align*}
    \E(\si\updcons \si^1_1)
    \ &\le\ \max\{\E(\si),\E(\si^1_1)\}
    \ \le\ U-\lh(\cD_0) + \lh(\cD_1)
    \ <\  U - \lh(\cD_2) \\
    \seqlh(\si\updcons \si^1_1)
    \ &\le\ \seqlh(\si)+\seqlh(\si^1_1)
    \ \le\ U-\lh(\cD_0) + \lh(\cD_1)
    \ <\  U - \lh(\cD_2)
  \end{align*}
  and
  \begin{multline*}
    \G(\si\updcons \si^1_1)
    \ \le\ \max\{ \G(F),\G(\rho),\G(\si)\}+\lh(\cD_1) \\
    \ \le\ U-\lh(\cD_0) + \lh(\cD_1)
    \ <\  U - \lh(\cD_2)
  \end{multline*}
  because $\lh(\cD_0)>\lh(\cD_1)+\lh(\cD_2)$.
  Thus, we can apply i.h.\ to $\cD_2$, $\rho$ and
  $\si\updcons\si^1_1$, obtaining $\si^2_1$ satisfying
  \begin{align*}
    \E(\si^2_1),\seqlh(\si^2_1)\ &\le\ \lh(\cD_2) \enspace, \\
    \G(\si^2_1)\ &\le\ \max\{\G(\rho),\G(F), \G(\si\updcons\si^1_1)\} + \lh(\cD_2)
                   \enspace, \text{ and } \\
    \evalr{s}{F^1_1}\ &\apprx\ \evalr{u}{F^1_1\updcons \si^2_1}\enspace.
  \end{align*}
  Let $\si_1$ be $\si^1_1\seqcons\si^2_1$, then 
  we compute
  \begin{align*}
    \E(\si_1)\ &=\ \max\{\E(\si^1_1),\E(\si^2_1)\}
    \ \le\ \lh(\cD_1) + \lh(\cD_2)
    \ <\  \lh(\cD_0)\\
    \seqlh(\si_1)\ &=\ \seqlh(\si^1_1)+\seqlh(\si^2_1)
    \ \le\ \lh(\cD_1) + \lh(\cD_2)
    \ <\  \lh(\cD_0)
  \end{align*}
  and
  \begin{align*}
    \G(\si_1)\ &=\ \max\{ \G(\si^1_1), \G(\si^2_1) \} \\
    \ &\le\ \max\{ \G(\si^1_1),
         \max\{\G(\rho),\G(F), \G(\si\seqcons\si^1_1)\} + \lh(\cD_2) \} \\
    \ &=\ \max\{\G(\rho),\G(F), \G(\si),\G(\si^1_1)\} + \lh(\cD_2)\qquad \\
    \ &\le\ \max\{ \G(F),\G(\rho),\G(\si)\}+\lh(\cD_1) + \lh(\cD_2) \\
    \ &<\ \max\{ \G(F),\G(\rho),\G(\si)\}+\lh(\cD_0)
  \end{align*}
  Furthermore, $\evalfr{t}\ \apprx\ \evalr{s}{F^1_1}\ \apprx\
  \evalr{u}{F^1_1\updcons \si^2_1}\ =\  \evalr{u}{F'\updcons \si_1}$,
  because
  \[
    F^1_1\updcons \si^2_1
    \ =\ (F'\updcons \si^1_1) \updcons \si^2_1
    \ =\ F'\updcons (\si^1_1 \seqcons \si^2_1)
    \ =\ F'\updcons \si_1
  \]
  which proves the assertion for $\si_1$.
  The construction for $\si_2$ is similar, starting with~$\cD_2$.

  \bigskip

  \paragraph{\bf Compatibility Rule}
  In case of the last rule being the Compatibility Rule, $\cD_0$ will
  have the following form:
  \begin{prooftree}
    \AxiomC{$\cD_1$}
    \noLine
    \UnaryInfC{$t=u$}
    \UnaryInfC{$s[t/x]=s[u/x]$}
  \end{prooftree}
  Applying the i.h.\ to $\cD_1$, $\rho$ and $\si$, we obtain $\si_1$
  and $\si_2$ satisfying the assertion for~$\cD_1$.
  Let $\rho^1_1 = \rho[x\mapsto\evalfr{t}]$ and
  $\rho^2_1 = \rho[x\mapsto\evalr{u}{F'\updcons\si_1}]$.
  Then we have $\rho^1_1\apprx\rho^2_1$.
  Employing the Substitution Lemma~\ref{lem:substitution}, we obtain
  \[
    \evalfr{s[t/x]}
    \ = \ \eval{s}_{F,\rho^1_1}
    \ \apprx\ \eval{s}_{F'\updcons\si_1,\rho^2_1}
    \ =\ \eval{s[u/x]}_{F'\updcons\si_1,\rho} 
  \]
  which shows that $\si_1$ also satisfies the assertion for $\cD_0$.
  Similar for $\si_2$ and $\cD_0$.

  \bigskip

  \paragraph{\bf Substitution Rule}
  If $\cD_0$ ends in an application of Substitution, it will have the
  following form:
  \begin{prooftree}
    \AxiomC{$\cD_1$}
    \noLine
    \UnaryInfC{$t=u$}
    \UnaryInfC{$t[s/x]=u[s/x]$}
  \end{prooftree}
  We only consider the case that $x$ is occurring in $t=u$, the other
  case is trivial.
  
  Let $\rho'$ be $\rho[x\mapsto\evalfr{s}]$, then clearly
  $\dom(\rho')\subseteq\var(\cD)$.
  Furthermore, using Lemma~\ref{lem:valuebound}, we obtain
  \begin{align*}
    \G(\rho')\ &\le\ \max\{\G(\rho),\G(\evalfr{s})\} \\
    \ &\le\ \max\{ \G(\rho), \max\{\G(\rho),\G(F)\}+\lh(s) \} \\
    \ &=\ \max\{\G(\rho),\G(F)\}+\lh(s)
  \end{align*}
  hence
  \[
    \G(\rho')\ \le\ U-\lh(\cD_0) +\lh(s)
    \ <\ U-\lh(\cD_1)
  \]
  because
  \[
    \lh(\cD_0)\ =\ \lh(\cD_1) + \lh(t[s/x]=u[s/x])
    \ >\ \lh(\cD_1) + \lh(s) \enspace.
  \]
  Thus we can apply the i.h.\ to $\cD_1$, $\rho'$ and $\si$, obtaining
  $\si_1$ and $\si_2$ such that
  \[
    \E(\si_i),\seqlh(\si_i)\ \le\ \lh(\cD_1) \ <\ \lh(\cD_0)
  \]
  and
  \begin{align*}
    \G(\si_i)\ &\le\
                 \max\{\G(F),\G(\si),\G(\rho')\} + \lh(\cD_1)  \\
               &\le\ \max\{\G(F),\G(\si),\G(\rho)\} + \lh(s) + \lh(\cD_1)  \\
    &<\ \max\{\G(F),\G(\si),\G(\rho)\} + \lh(\cD_0) 
  \end{align*}
  Furthermore,
  \[
    \eval{t}_{F,\rho'}\ \apprx\ \eval{u}_{F'\updcons\si_1,\rho'}
  \]
  Now we can compute, employing the Substitution
  Lemma~\ref{lem:substitution},
  \begin{align*}
    \evalfr{t[s/x]}
    \ &= \ \eval{t}_{F,\rho'}
        \ \apprx\ \eval{u}_{F'\updcons\si_1,\rho'} \\
        \ &=\ \eval{u}_{F'\updcons\si_1,\rho[x\mapsto\evalfr{s}]} \\
    \ &\apprx\ \eval{u}_{F'\updcons\si_1,\rho[x\mapsto\eval{s}_{F'\updcons\si_1,\rho}]}    
    \ =\ \eval{u[s/x]}_{F'\updcons\si_1,\rho} 
  \end{align*}
  which proves the assertion for $\si_1$ and $\cD_0$.
  Similar for $\si_2$ and $\cD_0$.
\end{proof}


\begin{corollary}\label{cor:Stt}
  The consistency of $\PETS(\Ax)$ is provable in $\Stt$.
\end{corollary}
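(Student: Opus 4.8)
The goal is to derive Corollary~\ref{cor:Stt}, the consistency of $\PETS(\Ax)$ in \Stt, from Theorem~\ref{thm:soundness in S22}. This is a short deduction, so the plan is essentially to exhibit a model and a well-chosen assignment that distinguish the terms $0$ and $1$ under approximation semantics, and then argue that a derivation of $0=1$ would contradict the soundness theorem.

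The plan is as follows. Reasoning in \Stt, suppose towards a contradiction that there is a derivation $\D \vdash 0 = 1$, where $0 \equiv \suc_0(\eps)$ and $1 \equiv \suc_1(\eps)$. By Proposition~\ref{prop:VNF} we may assume $\D$ is in Variable Normal Form (at the cost of at most squaring its length, which is harmless here). First I would take $F$ to be the empty frame; since the empty frame assigns $\bot$ to every non-basic symbol and the basic symbols $\eps, \suc_0, \suc_1$ are always interpreted as themselves, the empty frame is trivially a model of \Ax (every axiom $f(\vt)=u$ with $f\in\cF\setminus\cB$ evaluates its left-hand side to $*$, and $* \apprx \evalfr{u}$). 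Next I would take $\rho$ to be any assignment, say the empty one. With $\kappa = \max\{\G(F),\G(\rho)\}+\lh(\cD)$, Theorem~\ref{thm:soundness in S22} then yields a sequence of updates $\sigma_1$ based on $F,\kappa,\cD$ such that
\[
  \evalfr{0} \ \apprx\ \eval{1}_{F\updcons\sigma_1,\rho} \enspace.
\]

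The crux is then a direct computation of both sides using only the semantics of basic function symbols, which are interpreted identically in every frame. Since $0 \equiv \suc_0(\eps)$, we have $\evalfr{0} = \suc_0(\evalfr{\eps}) = \suc_0(\eps) = \eps0$, and this value does not depend on the frame, so likewise $\eval{1}_{F\updcons\sigma_1,\rho} = \suc_1(\eps) = \eps1$. The approximation relation $\apprx$ was defined so that $\eps0 \apprx \eps1$ fails: unfolding the inductive definition, $\eps 0 \apprx \eps 1$ would require $\eps \apprx \eps$ together with matching last digits $0$ and $1$, which is impossible. This contradicts the conclusion of the soundness theorem, so no such derivation $\D$ exists, which is exactly $\Cons(\PETS(\Ax))$.

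I expect the only point needing a little care is confirming that the entire argument is carried out inside \Stt, i.e.\ that the assumption ``there exists a derivation of $0=1$'' and the subsequent appeal to Theorem~\ref{thm:soundness in S22} are formalizable without leaving the theory. Since the soundness theorem is itself stated as provable in \Stt, and the evaluation of the closed basic terms $0$ and $1$ together with the refutation of $\eps 0 \apprx \eps 1$ are decidable, bounded computations, this presents no obstacle; the whole deduction is a provable implication in \Stt. Thus \Stt proves $\Cons(\PETS(\Ax))$, establishing the corollary.
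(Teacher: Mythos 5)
Your proposal is correct and follows essentially the same route as the paper: assume a Variable Normal Form derivation of $0=1$, take the empty frame and empty assignment, invoke Theorem~\ref{thm:soundness in S22}, and derive the contradiction $0\apprx 1$. The extra details you supply (why the empty frame is a model, why $\eps0\apprx\eps1$ fails) are accurate and merely make explicit what the paper leaves implicit.
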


\begin{proof}
  We argue in \Stt.
  Assume $\D$ is a $\PETS(\Ax)$ derivation ending in $0=1$.
  Using Proposition~\ref{prop:VNF} we can assume that $\D$ is in
  Variable Normal Form.
  Let $\rho$ be the empty assignment, and $F$ the empty model
  for~$\Ax$.
  Let $\kappa=\lh(\D)$.
  By the previous Theorem~\ref{thm:soundness in S22}, there is a
  sequence $\si_1$ of updates based on $F$, $\kappa$ and $\D$
  such that
  \[
    0 \quad=\quad \evalfr{0}
    \quad\apprx\quad  \eval{1}_{F\updcons \sigma_1, \rho}
    \quad=\quad 1
  \]
  which is impossible.
\end{proof}

\section{Instructions and Frame Models}\label{sec:instructions}

In order to be able to prove our main theorem in \Sot, we need to turn a
proof tree consisting of equations using rules for equational
reasoning into some linear sequence which describes how
terms are transformed step by step in order to go from the term
on the left-hand side of the final equation in the proof tree to the
term on the right-hand side, and visa versa.
This idea is similar to the one used in~\cite{Beckmann:2002} where such
proof trees (without the Substitution Rule) were turned into paths of a
corresponding term rewriting relation.
Here we
use instructions storing the operation that should be applied to a term
while moving through the tree.

We start by naming the instructions that will be considered.

\begin{definition}[Instructions]\label{def:instructions}
  We define a set of \emph{instructions} and their \emph{length} as
  follows: 
  \begin{description}
  \item[Axiom]
    $\IAx[t \rightarrow u]$ and $\IAx[t \leftarrow u]$
    are instructions,
    for any axiom $t=u\in\Ax$.
    Their length is $\lh(t)+\lh(u)+1$.
  \item[Substitution] $\ISup[s,t/x]$ and $\ISdwn[s,t/x]$
    are instructions,
    for terms $s,t$ and variable~$x$.
    Their length is $\lh(s)+\lh(t)+1$.
  \end{description}
  Sequences of instructions will be denoted with~$\tau$.
  With $\seqlh(\tau)$ we denote the sequence length of $\tau$, that is the number of instructions occurring in~$\tau$.
  With $\lh(\tau)$ we denote the length of $\tau$ given as the sum of
  lengths of instructions occurring in them.
\end{definition}

We now define the process of turning derivations into sequences of related instructions.

\begin{definition}\label{def:sequences of instructions}
  For a derivation $\D$, we define \emph{sequences of instructions}
  $\InstR_\D$ and $\InstL_\D$ by recursion on $\D$ as follows.
  \begin{description}
  \item[Axiom Rule]
    If $D$ is of the form
    \begin{prooftree}
        \AxiomC{}
        \UnaryInfC{$t = u$}
    \end{prooftree}
    let
    \begin{align*}
        \InstR_\D &\ :=\  \Seq{\IAx[t \rightarrow u]} \\
        \InstL_\D &\ :=\  \Seq{\IAx[t \leftarrow u]}
    \end{align*}

  \item[Reflexivity Rule]
    If $D$ is of the form
    \begin{prooftree}
        \AxiomC{}
        \UnaryInfC{$t = t$}
    \end{prooftree}
    let
    \[
      \InstR_\D \ :=\ \InstL_\D \ :=\  \Seq{}
    \]

  \item[Symmetry Rule]
    Consider $\D$ of the form
    \begin{prooftree}
      \AxiomC{$\D_1$}
      \noLine
      \UnaryInfC{$u = t$}
      \UnaryInfC{$t = u$}
    \end{prooftree}
    Let $\InstR_{\D_1}$ and  $\InstL_{\D_1}$ be given by i.h.,
    then define
    \begin{align*}
      \InstR_\D &\ :=\  \InstL_{\D_1}\\
      \InstL_\D &\ :=\  \InstR_{\D_1}
    \end{align*}

  \item[Transitivity Rule]
    Consider $\D$ of the form
    \begin{prooftree}
      \AxiomC{$\D_1$}
      \noLine
      \UnaryInfC{$t=s$}
      \AxiomC{$\D_2$}
      \noLine
      \UnaryInfC{$s=u$}
      \BinaryInfC{$t=u$}
    \end{prooftree}
    Let $\InstR_{\D_1}$, $\InstL_{\D_1}$, $\InstR_{\D_2}$ and
    $\InstL_{\D_2}$ be given by i.h.
    Define
    \begin{align*}
      \InstR_\D &\ :=\  \InstR_{\D_1} \seqcons \InstR_{\D_2}\\
      \InstL_\D &\ :=\  \InstL_{\D_2} \seqcons \InstL_{\D_1}.
    \end{align*}

  \item[Compatibility Rule]
    If $\D$ is of the form
    \begin{prooftree}
      \AxiomC{$\cD_1$}
      \noLine
      \UnaryInfC{$t=u$}
      \UnaryInfC{$s[t/x]=s[u/x]$}
    \end{prooftree}
    Let $\InstR_{\D_1}$ and  $\InstL_{\D_1}$ be given by i.h.,
    then define
    \begin{align*}
      \InstR_\D &\ :=\  \InstR_{\D_1}\\
      \InstL_\D &\ :=\  \InstL_{\D_1}
    \end{align*}

  \item[Substitution Rule]
    If $\D$ is of the form
    \begin{prooftree}
      \AxiomC{$\cD_1$}
      \noLine
      \UnaryInfC{$t=u$}
      \UnaryInfC{$t[s/x]=u[s/x]$}
    \end{prooftree}
    then let
    \begin{align*}
      \InstR_\D &\ :=\  \ISup[t,s/x] \seqadd \InstR_{\D_1}
                  \seqadd \ISdwn[u,s/x]\\
      \InstL_\D  &\ :=\  \ISup[u,s/x] \seqadd \InstL_{\D_1}
                   \seqadd \ISdwn[t,s/x]
    \end{align*}
  \end{description}
\end{definition}

\begin{remark}
  We observe that $\lh(\InstR_\D)=\lh(\InstL_\D)\le\lh(\D)$.
\end{remark}

We will now describe a process of evaluating terms using
approximations along sequences of instruction.
We start with the most basic and also most interesting step of the
reverse direction of an axiom instruction.

\textbf{For the remainder of this section,
we assume that $\kappa$ and $\cD$ are fixed.}

\begin{definition}\label{def:Psi}
  Let $t = u$ be an axiom in $\cD$,
  $\rho$ an assignment,
  and $F$ a $\kappa$-model of~\Ax.
  Define $\Psi(t \leftarrow u, \Seq{F, \rho})$ to be
  $\updfvw$ satisfying
  \begin{itemize}
  \item 
    $t$ is of the form $f(\vt)$ for some terms $\vt$;
  \item
    $v_i=\rho(t_i)$ for $i\le\ar(f)$;
  \item
    and $w=\evalfr{u}$.
  \end{itemize}
  For a sequence $\si$ of updates based on $F$, $\kappa$ and $\D$ we let
  $\Psi(t \leftarrow u, \Seq{F, \si, \rho})$ be
  $\Psi(t \leftarrow u, \Seq{F\updcons\si, \rho})$.
\end{definition}

\begin{lemma}\label{lem:Psi}
  Let $t = u$ be an axiom in \Ax, $\kappa'$ a positive integer with $\kappa'\le\kappa-\lh(u)$, 
  $\rho$~an assignment with $\G(\rho)\le\kappa'$, 
  and $F$ a $\kappa$-model of $\D$ with $\G(F)\le\kappa'$.
  Let $\updfvw$ be given by $\Psi(t \leftarrow u, \Seq{F, \rho})$.
  Then $\updfvw$ is an update based on $F$, $\kappa$ and $\D$,
  satisfying that $\G(\vv,w)\le\kappa'+\lh(u)$ and
  \[
    \evalfr{u}\ \apprx\ \eval{t}_{\FupdFfvw,\rho} \enspace.
  \]
\end{lemma}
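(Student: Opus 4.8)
The plan is to unfold the definition of $\Psi$ (Definition~\ref{def:Psi}) and verify the three assertions in turn, with the gauge bound carrying most of the weight, since it simultaneously yields that $\updfvw$ is a legitimate update.

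First I would prove the gauge bound $\G(\vv,w)\le\kappa'+\lh(u)$, recalling that $\G(\vv,w)=\max\{\G(\vv),\G(w)\}$. For the arguments $v_i=\rho(t_i)$: as $\Ax$ is nice, $t$ is of the form $f(\vt)$ with each $t_i$ a generalized variable, so $t_i$ is a variable, $\eps$, or $\suc_j(x)$. In each case $\G(v_i)\le\G(\rho)+1\le\kappa'+1$, hence, since $\lh(u)\ge1$, $\G(v_i)\le\kappa'+\lh(u)$ and thus $\G(\vv)\le\kappa'+\lh(u)$. For $w=\evalfr{u}$, Lemma~\ref{lem:valuebound} gives $\G(w)\le\max(\G(\rho),\G(F))+\lh(u)\le\kappa'+\lh(u)$. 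Combining these establishes the second assertion. Because $\kappa'\le\kappa-\lh(u)$, we also obtain $\G(\vv,w)\le\kappa$, which is one of the two conditions for $\updfvw$ to be an update based on $F$, $\kappa$ and $\cD$ (Definition~\ref{def:updates}); the remaining condition --- existence of a witnessing axiom $t=u$ occurring in $\cD$ together with an assignment $\rho$ satisfying $v_i=\rho(t_i)$ and $w=\evalfr{u}$ --- holds verbatim by the definition of $\Psi$. This settles the first assertion.

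For the approximation, put $F'=\FupdFfvw$. Since $\updfvw$ is now known to be an update based on $F$, $\kappa$, $\cD$ and $F$ is a $\kappa$-model, Proposition~\ref{prop:model update} shows $F'$ is again a $\kappa$-model; in particular $F'(f)$ is a consistent set, so $F'(f)(\vv)$ is well-defined. Next I would note that each $t_i$ is a generalized variable --- a variable, $\eps$, or $\suc_j(x)$ --- involving no function symbol outside $\cB$, so its evaluation is independent of the non-basic part of the frame: $\eval{t_i}_{F',\rho}=\rho(t_i)=v_i$. Hence $\eval{t}_{F',\rho}=F'(f)(v_1,\dots,v_n)=F'(f)(\vv)$. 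Finally, because $\vv\mapsto w\in F'(f)$ and $\vv\apprx\vv$, we have $w\in F'(f)[\vv]$, so $w\apprx\maxapprx F'(f)[\vv]=F'(f)(\vv)$. As $w=\evalfr{u}$, this is exactly $\evalfr{u}\apprx\eval{t}_{F',\rho}$, the third assertion.

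The one point requiring care is the final maximality step: $w\apprx F'(f)(\vv)$ rests on $F'(f)[\vv]$ being a set of pairwise compatible elements whose greatest element is $\maxapprx F'(f)[\vv]$ (Lemma~\ref{lem:maxapprx}), which in turn needs $F'(f)$ to be consistent. I obtain this consistency only from Proposition~\ref{prop:model update}, so the argument must be sequenced to settle the update property before invoking maximality. Everything else is a direct unfolding of the definitions of evaluation and of finitely generated maps.
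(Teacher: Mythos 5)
Your proposal is correct and follows essentially the same route as the paper's proof: bound $\G(v_i)$ by $\G(\rho)+1$ and $\G(w)$ via Lemma~\ref{lem:valuebound} to get the gauge bound and the update property, then chain $w\apprx\maxapprx F'(f)[\vv]=F'(f)(\vv)=\eval{t}_{F',\rho}$ using that the $t_i$ are generalized variables. The only difference is that you explicitly invoke Proposition~\ref{prop:model update} to justify consistency of $F'(f)$ before applying $\maxapprx$, a point the paper leaves implicit; this is a harmless (indeed slightly more careful) addition.
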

  
\begin{proof}
  As \Ax is nice, we have that $t$ is of the form $f(\vt)$ for some
  $f\in\cF\setminus\cB$ and generalized variables $\vt$ (see Definition~\ref{def:generalized var}).
  Then $v_i=\rho(t_i)$ and $w=\eval{u}_{F,\rho}$.
  We compute $\G(v_i)\le\G(\rho)+1\le\kappa'+\lh(u)\le\kappa$,
  and, using Lemma~\ref{lem:valuebound},
  \[
    \G(w)
    \ \le\ \max\{\G(\rho),\G(F)\} + \lh(u)
    \ \le\ \kappa' + \lh(u) \ \le\ \kappa
  \]
  Hence, $\updfvw$ is an update based on $F$, $\kappa$ and $\D$.

  Furthermore, for $F' = \FupdFfvw$, we have
  \begin{align*}
    \eval{u}_{F,\rho}\ =\ w\ \apprx\ \maxapprx F'(f)[\vv]
     \ &=\ F'(f)(\vv) \\
        &=\ F'(f)(\dots,\rho(t_i)\dots) 
          \ =\ \eval{t}_{F',\rho}
          \enspace .                                                    
  \end{align*}
\end{proof}

\begin{definition}
  Let $\tau$ be a sequence of instructions,
  $\rho$ an assignment,
  $F$ a $\kappa$-model for $\D$,
  and $\si$ a sequence of updates based on $F$, $\kappa$ and $\D$.
  Let $\al=\Seq{F,\si,\rho}$.
  We define $\Phi(\tau,\al) = \Seq{F,\si',\rho'}$ by
  induction on $\tau$:

  If $\tau$ is the empty sequence,
  let $\Phi(\Seq{}, \al) = \al$

  Otherwise, $\tau$ is of the form $\tau'\seqadd I$ for some
  instruction~$I$.
  Let $\Seq{F,\si',\rho'}=\Phi(\tau',\al)$ by i.h., and let $F'$ be
  $F\updcons\si'$.
  We consider cases according to the form of $I$:
  \begin{description}
  \item[Axiom]
    For $I=\IAx[t \rightarrow u]$ let
    $\Phi(\tau,\al) = \Seq{F,\si',\rho'}$.

    For $I=\IAx[t \leftarrow u]$ let
    $\nu = \Psi(t \leftarrow u, \Seq{F,\si',\rho'})$,
    and define
    \[
      \Phi(\tau,\al) \ =\ \Seq{F,\si'\updcons\nu,\rho'} \enspace.
    \]


  \item[Substitution]
    If $I=\ISup[t,s/x]$, let
    \[
      \Phi(\tau,\al) \ =\ 
      \Seq{F,\si',\rho'[x\mapsto\eval{s}_{F',\rho'}]}\enspace.
    \]
    
    If $I=\ISdwn[t,s/x]$, let
    $\rho''$ be $\rho'$ but with $x$ removed from its domain:
    $\rho'' = \rho'\restriction_{\dom(\rho')\setminus\{x\}}$.
    Then let
    \[
      \Phi(\tau,\al) \ =\ \Seq{F,\si',\rho''}  \enspace.
    \]
  \end{description}
\end{definition}

%

%

\begin{lemma}\label{lem:instruction update measures}
  Let $\tau$ be a sequence of instructions for $\D$,
  $\rho$ an assignment,
  $F$ a $\kappa$-model of $\D$,
  and $\si$ a sequence of updates based on $F,\kappa$ and $\D$,
  satisfying
  \[
    \max\{\G(\rho),\G(F),\G(\si)\} + \lh(\tau) \ \le\  \kappa  \enspace.
  \]
  Let $\Seq{F,\si',\rho'}$ be $\Phi(\tau, \Seq{F,\si,\rho})$.
  Then we have
  \begin{enumerate}
  \item
    $\si'$ is a sequence of updates based on $F$, $\kappa$ and $\D$;
  \item
    $\seqlh(\si') \ \le\ \seqlh(\si) + \seqlh(\tau)$;
  \item
    $\G(\rho'),\G(\si') \ \le\  \max\{\G(\rho),\G(F),\G(\si)\}+\lh(\tau)$.
  \item
    $\E(\si') \ \le\ \E(\si) + \lh(\tau)$;
  \end{enumerate}
\end{lemma}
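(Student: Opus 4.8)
The plan is to argue by induction on $\tau$, matching the recursive definition of $\Phi$. In the base case $\tau=\Seq{}$ we have $\Phi(\Seq{},\Seq{F,\si,\rho})=\Seq{F,\si,\rho}$, so $\si'=\si$ and $\rho'=\rho$, and all four claims hold trivially since $\seqlh(\Seq{})=0$ and $\lh(\Seq{})=0$. For the inductive step, write $\tau=\tau'\seqadd I$ and let $\Seq{F,\hat\si,\hat\rho}=\Phi(\tau',\Seq{F,\si,\rho})$. Because $\lh(\tau')\le\lh(\tau)$, the hypothesis of the Lemma still holds for $\tau'$, so the induction hypothesis supplies the four bounds for $\hat\si$ and $\hat\rho$. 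Throughout I abbreviate $M=\max\{\G(\rho),\G(F),\G(\si)\}$ and $F'=F\updcons\hat\si$, noting that $\G(F')=\max\{\G(F),\G(\hat\si)\}\le M+\lh(\tau')$ and that $F'$ is a $\kappa$-model of $\D$ by Corollary~\ref{cor:models}. It then remains to inspect the four possible forms of the final instruction $I$.

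First I would dispatch the three easy cases. For $I=\IAx[t\rightarrow u]$ nothing changes, i.e.\ $\si'=\hat\si$ and $\rho'=\hat\rho$, so the claims are immediate from the induction hypothesis together with $\lh(\tau')\le\lh(\tau)$ and $\seqlh(\tau')\le\seqlh(\tau)$. For $I=\ISdwn[t,s/x]$ we again have $\si'=\hat\si$, while $\rho'$ is obtained from $\hat\rho$ by deleting $x$ from its domain, which can only decrease $\G(\rho')$; hence all claims follow. The case $I=\ISup[t,s/x]$ leaves $\si'=\hat\si$ but sets $\rho'=\hat\rho[x\mapsto\eval{s}_{F',\hat\rho}]$; here Lemma~\ref{lem:valuebound} gives $\G(\eval{s}_{F',\hat\rho})\le\max\{\G(\hat\rho),\G(F')\}+\lh(s)\le M+\lh(\tau')+\lh(s)$, and since $\lh(s)\le\lh(I)$ and $\lh(\tau')+\lh(I)=\lh(\tau)$ we obtain $\G(\rho')\le M+\lh(\tau)$, as required.

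The substantive case is the reverse axiom instruction $I=\IAx[t\leftarrow u]$, where $\si'=\hat\si\updcons\nu$ for $\nu=\Psi(t\leftarrow u,\Seq{F,\hat\si,\hat\rho})$. The key step is to apply Lemma~\ref{lem:Psi} with $\kappa':=M+\lh(\tau')$: the induction hypothesis yields $\G(\hat\rho)\le\kappa'$ and $\G(F')\le\kappa'$, while the length bookkeeping $\lh(\tau')+\lh(u)\le\lh(\tau')+\lh(I)=\lh(\tau)$ combined with $M+\lh(\tau)\le\kappa$ gives exactly $\kappa'\le\kappa-\lh(u)$. Lemma~\ref{lem:Psi} then certifies that $\nu$ is a genuine update based on $F'$, $\kappa$ and $\D$, so that $\si'=\hat\si\updcons\nu$ is a sequence of updates based on $F$, $\kappa$ and $\D$ (claim~(1)), and it provides the gauge bound $\G(\nu)\le\kappa'+\lh(u)\le M+\lh(\tau)$. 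Claim~(3) follows by taking the maximum of this with $\G(\hat\si)\le M+\lh(\tau')$; claim~(2) follows from $\seqlh(\si')=\seqlh(\hat\si)+1\le\seqlh(\si)+\seqlh(\tau')+1=\seqlh(\si)+\seqlh(\tau)$; and claim~(4) follows from $\E(\nu)=\ar(f)<\lh(t)\le\lh(\tau)$ together with the inductive bound $\E(\hat\si)\le\E(\si)+\lh(\tau')$.

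I expect the reverse-axiom case to be the only genuine obstacle, and within it the delicate point is the choice of $\kappa'$ and the verification of $\kappa'\le\kappa-\lh(u)$: this is precisely where the gauge budget $\kappa$ is consumed, and it is what forces the whole accounting to be carried out in terms of the instruction length $\lh(\tau)$ rather than the sequence length $\seqlh(\tau)$. Every other case is routine propagation of the four measures through the recursion, and the construction of $\Phi$ is explicit and simple enough to be formalized alongside the paper's other meta-mathematical developments.
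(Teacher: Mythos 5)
Your proposal is correct and follows essentially the same route as the paper's proof: induction along the prefixes of $\tau$ (peeling off the last instruction to match the recursion in $\Phi$), with the three easy cases handled by direct propagation, Lemma~\ref{lem:valuebound} for $\ISup$, and Lemma~\ref{lem:Psi} with the gauge budget $\kappa'=\max\{\G(\rho),\G(F),\G(\si)\}+\lh(\tau')$ for the reverse axiom instruction. Your bookkeeping in that last case, including the verification of $\kappa'\le\kappa-\lh(u)$ via $\lh(u)<\lh(I)$, matches the paper's computation.
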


\begin{proof}
  Let $\tau_i$ be the sequence consisting of the first $i$ elements in~$\tau$, for $i=0,\dots,\seqlh(\tau)$.
  Let $\Seq{F,\si_i,\rho_i}$ be $\Phi(\tau_i, \Seq{F,\si,\rho})$.
  We can show by induction on~$i$ that
  \begin{enumerate}
  \item\label{lem:prf:instruction update measures - 1}
    $\si_i$ is a sequence of updates based on $F$, $\kappa$ and $\D$;
  \item\label{lem:prf:instruction update measures - 2}
    $\seqlh(\si_i) \ \le\ \seqlh(\si) + \seqlh(\tau_i)$;
  \item\label{lem:prf:instruction update measures - 3}
    $\G(\rho_i),\G(\si_i) \ \le\  \max\{\G(\rho),\G(F),\G(\si)\}+\lh(\tau_i)$.
  \item\label{lem:prf:instruction update measures - 4}
    $\E(\si_i) \ \le\ \E(\si) + \lh(\tau_i)$;
  \end{enumerate}
  
  For $i=0$ there is nothing to show as $\si_0=\si$ and $\rho_0=\rho$.

  In the induction step from $i$ to $i+1$ we have $\tau_{i+1}=\tau_i\seqadd I$ 
  for some instruction~$I$.
  We consider cases according to $I$.
  
  If $I=\IAx[t \rightarrow u]$ or $I=\ISdwn[t,s/x]$, there is nothing to show 
  as $\si_{i+1}=\si_i$ and $\G(\rho_{i+1})\le\G(\rho_i)$.
  
  In case $I=\ISup[t,s/x]$ we have $\si_{i+1}=\si_i$ and 
  $\rho_{i+1}=\rho_i[x\mapsto\eval{s}_{F\updcons\si,\rho_i}]$.
  Thus assertion \eqref{lem:prf:instruction update measures - 1} and 
  \eqref{lem:prf:instruction update measures - 2} follow immediately from i.h.
  For assertion \eqref{lem:prf:instruction update measures - 3} we compute,
  using Lemma~\ref{lem:valuebound}, 
  \begin{align*}
    \G(\eval{s}_{F\updcons\si,\rho_i}) 
    \ &\le\ \max\{\G(F),\G(\si_i),\G(\rho_i)\}+\lh(s) \enspace.
  \end{align*}
  Hence, using i.h.
  \begin{align*}
    \G(\rho_{i+1}) 
      \ &\le\ \max\{\G(\rho_i),\G(\eval{s}_{F\updcons\si,\rho_i})\} \\
      \ &\le\ \max\{\G(F),\G(\si_i),\G(\rho_i)\}+\lh(s) \\
      \ &\le\ \max\{\G(F),\G(\si),\G(\rho)\}+\lh(\tau_i)+\lh(s) \\
      \ &<\ \max\{\G(F),\G(\si),\G(\rho)\}+\lh(\tau_{i+1}) \enspace.
  \end{align*}

  In case $I=\IAx[t \leftarrow u]$ we have $\rho_{i+1}=\rho_i$.
  Let
  $\nu = \Psi(t \leftarrow u, \Seq{F,\si_i,\rho_i})$.
  By Lemma~\ref{lem:Psi} we obtain that $\nu$ is an update
  based on $F$, $\kappa$ and $\D$, 
  and that
  \[
    \G(\nu) \ \le\ \max\{\G(F),\G(\si_i),\G(\rho_i)\}+\lh(u)  \enspace.
  \]
  The former immediately implies assertion \eqref{lem:prf:instruction update measures - 1} 
  for $\si_{i+1}$.
  The latter implies,
  using i.h.
  \begin{align*}
    \G(\si_{i+1}) 
      \ &\le\ \max\{\G(\si_i),\G(\nu)\} \\
      \ &\le\ \max\{\G(F),\G(\si_i),\G(\rho_i)\}+\lh(u) \\
      \ &\le\ \max\{\G(F),\G(\rho),\G(\si)\}+\lh(\tau_i)+\lh(u) \\
      \ &<\ \max\{\G(F),\G(\rho),\G(\si)\}+\lh(\tau_{i+1}) \enspace.
  \end{align*}
  Thus assertion \eqref{lem:prf:instruction update measures - 3} follows.

  For assertion \eqref{lem:prf:instruction update measures - 2} we compute using i.h.
  \begin{align*}
    \seqlh(\si_{i+1})\ &=\ \seqlh(\si_i)+1 \\
      \ &\le\ \seqlh(\si)+\seqlh(\tau_i)+1 
      \ =\ \seqlh(\si)+\seqlh(\tau_{i+1}) \enspace.
  \end{align*}

  For assertion \eqref{lem:prf:instruction update measures - 4} we compute using i.h.
  \begin{align*}
    \E(\si_{i+1})\ &=\ \max\{\E(\si_i),\E(\nu)\} 
      \ \le\ \E(\si)+\lh(\tau_i)+ \lh(t)
      \ <\ \E(\si)+\lh(\tau_{i+1}) \enspace.
  \end{align*}
\end{proof}

\begin{lemma}
  Consider $\tau=\tau_1\seqcons\tau_2$.
  Then
  \[
    \Phi(\tau,\Seq{F,\si,\rho}) \quad=\quad
    \Phi(\tau_2, \Phi(\tau_1,\Seq{F,\si,\rho}))
  \]
\end{lemma}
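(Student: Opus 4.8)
The plan is to proceed by induction on the second sequence $\tau_2$, which matches the recursive structure of $\Phi$: its defining recursion peels off the rightmost instruction of its first argument via $\seqadd$. The preliminary observation that drives the whole argument is that $\Phi$ never alters the first component of its state: in every clause of the definition, $\Phi(\tau,\Seq{F,\si,\rho})$ has the form $\Seq{F,\si',\rho'}$ with the \emph{same} base frame $F$. Consequently each instruction $I$ induces a transformation $\mathrm{step}_I$ on states that depends only on the current state $\Seq{F,\si',\rho'}$ (and on $I$, where for the axiom and $\ISup$ clauses the auxiliary frame is always $F\updcons\si'$), so that all defining clauses are summarised uniformly as $\Phi(\tau'\seqadd I,\al)=\mathrm{step}_I(\Phi(\tau',\al))$ for every $\tau'$ and $\al$. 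I abbreviate $\al=\Seq{F,\si,\rho}$ throughout.

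For the base case $\tau_2=\Seq{}$, associativity gives $\tau_1\seqcons\Seq{}=\tau_1$, while the empty-sequence clause gives $\Phi(\Seq{},\Phi(\tau_1,\al))=\Phi(\tau_1,\al)$, so both sides coincide. For the inductive step I write $\tau_2=\tau_2'\seqadd I$ and use associativity of concatenation to rewrite $\tau_1\seqcons(\tau_2'\seqadd I)=(\tau_1\seqcons\tau_2')\seqadd I$. Applying the single-step summary of $\Phi$ to both occurrences then yields
\[
  \Phi(\tau_1\seqcons\tau_2,\al)=\mathrm{step}_I(\Phi(\tau_1\seqcons\tau_2',\al))
  \quad\text{and}\quad
  \Phi(\tau_2,\Phi(\tau_1,\al))=\mathrm{step}_I(\Phi(\tau_2',\Phi(\tau_1,\al))).
\]
The induction hypothesis for $\tau_2'$ gives $\Phi(\tau_1\seqcons\tau_2',\al)=\Phi(\tau_2',\Phi(\tau_1,\al))$, and applying $\mathrm{step}_I$ to both sides closes the induction. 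The induction can equivalently be phrased as induction on $\seqlh(\tau_2)$, which is unproblematic to carry out inside \Sot since the assertion is an equality between (encodings of) states.

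I expect the only point that genuinely needs care to be the invariance of the base frame $F$ under $\Phi$ noted at the outset: it is precisely this invariance that makes $\mathrm{step}_I$ a well-defined function of the current state, guaranteeing that the frame $F\updcons\si'$ referenced in the $\ISup$ and $\IAx[t\leftarrow u]$ clauses (the latter via $\Psi$) is built over the same $F$, regardless of whether the state was reached by processing $\tau_1\seqcons\tau_2'$ from $\al$ or by processing $\tau_2'$ starting from $\Phi(\tau_1,\al)$. Once this is in place, the remaining case analysis over the four forms of $I$ is entirely routine.
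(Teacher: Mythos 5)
Your proof is correct. The paper disposes of this lemma with the single line ``By induction on $\tau_1$'', whereas you induct on $\tau_2$; this is a genuinely better-aligned choice of induction variable, since the recursive definition of $\Phi$ peels the \emph{rightmost} instruction of its first argument (the clause $\tau=\tau'\seqadd I$), and the rightmost instruction of $\tau_1\seqcons\tau_2$ lives at the end of $\tau_2$. With your decomposition $\tau_1\seqcons(\tau_2'\seqadd I)=(\tau_1\seqcons\tau_2')\seqadd I$ the step case is a one-line application of the induction hypothesis followed by $\mathrm{step}_I$. An induction on $\tau_1$ as literally stated in the paper would have to either peel $\tau_1$ from the left (which does not match the recursion and needs the auxiliary identity $\Phi(I\seqadd\tau_2,\beta)=\Phi(\tau_2,\mathrm{step}_I(\beta))$, itself proved by induction on $\tau_2$) or quantify the induction hypothesis over all $\tau_2$; either way it reduces to essentially the computation you carried out. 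Your preliminary observation that the base frame $F$ is invariant under $\Phi$, so that each instruction acts as a well-defined transformation of the state $\Seq{F,\si',\rho'}$, is exactly the point that makes the single-step summary legitimate, and you are right that it is the only place where any care is needed.
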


\begin{proof}
    By induction on $\tau_1$.
\end{proof}

\section{Soundness in \texorpdfstring{\Sot}{S12}}\label{sec:SoundnessInS12}

We are now in the position to prove a form of soundness of pure equational reasoning in \Sot.
As a reminder, $\bvar(\D)$ denotes the set of variables occurring in~$\D$ that are bound by an application of substitution, see Definition~\ref{def:derivations}.

\begin{lemma}\label{lem:PhiRho}
  Let $\D$ be a derivation in Variable Normal Form,
  $\rho$ an assignment such that $\dom(\rho)$ and $\bvar(\D)$ are
  disjoint.
  Let $\tau$ be $\InstR_\D$ or $\InstL_\D$, and let
  $\Seq{F,\si',\rho'}$ be $\Phi(\tau, \Seq{F,\si,\rho})$.
  Then $\rho' = \rho$.
\end{lemma}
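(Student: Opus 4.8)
The plan is to proceed by structural induction on $\D$, using the observation that the assignment component of $\Phi$ is altered only by the Substitution instructions $\ISup$ and $\ISdwn$, and that in Definition~\ref{def:sequences of instructions} these are introduced only in the Substitution Rule case, always as a matching pair surrounding the recursive instruction block. The frame component stays fixed at $F$ throughout, so only the $\rho$-component needs tracking. First I would record the elementary fact that $\bvar(\D_i)\subseteq\bvar(\D)$ for every immediate subderivation $\D_i$; together with the disjointness hypothesis $\dom(\rho)\cap\bvar(\D)=\emptyset$, this guarantees that the hypotheses of the lemma transfer to each subderivation as long as the assignment fed into the recursive call is disjoint from that subderivation's bound variables.

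The base cases (Axiom and Reflexivity) are immediate: the corresponding instruction sequences are empty or a single Axiom instruction, and such instructions leave the assignment component untouched, so $\rho'=\rho$. The Symmetry and Compatibility cases follow directly from the induction hypothesis, since $\InstR_\D$ and $\InstL_\D$ are, up to exchanging the two directions, the instruction sequences of the single subderivation $\D_1$, and $\bvar(\D)=\bvar(\D_1)$. For the Transitivity case I would use the immediately preceding lemma (that $\Phi(\tau_1\seqcons\tau_2,\Seq{F,\si,\rho})=\Phi(\tau_2,\Phi(\tau_1,\Seq{F,\si,\rho}))$) to split $\Phi$ over the concatenation; applying the induction hypothesis to $\D_1$ shows the assignment is unchanged after the first block, which then lets me apply the induction hypothesis to $\D_2$ with the same $\rho$ (but a possibly enlarged update sequence, which the lemma allows), yielding $\rho'=\rho$.

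The crux is the Substitution Rule, where $\InstR_\D=\ISup[t,s/x]\seqadd\InstR_{\D_1}\seqadd\ISdwn[u,s/x]$ and symmetrically for $\InstL_\D$. Splitting via the preceding lemma, I would process the sequence in three stages: the opening $\ISup[t,s/x]$ extends the assignment to $\rho_1=\rho[x\mapsto\eval{s}_{F\updcons\si,\rho}]$; the middle block $\InstR_{\D_1}$ is governed by the induction hypothesis; and the closing $\ISdwn[u,s/x]$ removes $x$ from the domain again. The decisive point is that $x\in\bvar(\D)$, being the variable bound by this very application of Substitution, so $x\notin\dom(\rho)$ by hypothesis, while Proposition~\ref{prop:VNF}\eqref{prop:VNF2} ensures $x$ is bound by no Substitution inside $\D_1$, hence $x\notin\bvar(\D_1)$. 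Therefore $\dom(\rho_1)=\dom(\rho)\cup\{x\}$ is disjoint from $\bvar(\D_1)$, the induction hypothesis applies to give $\Phi(\InstR_{\D_1},\Seq{F,\si,\rho_1})=\Seq{F,\si_1,\rho_1}$, and the final $\ISdwn$ deletes $x$ to restore exactly $\rho$, because $\rho_1$ agrees with $\rho$ away from $x$ and $x$ was absent from $\dom(\rho)$ to begin with.

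I expect the Substitution case to be the only genuine obstacle: it is where the Variable Normal Form is essential, guaranteeing both that the bound variable $x$ is fresh for $\dom(\rho)$ and that it is not re-bound within $\D_1$, so that the $\ISup$/$\ISdwn$ pair is balanced and its cumulative effect on the assignment cancels. The remaining cases are routine structural bookkeeping.
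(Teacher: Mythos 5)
Your proof is correct and follows essentially the same route as the paper's: induction on $\D$, observing that only the Substitution case alters the assignment, where the $\ISup$/$\ISdwn$ pair introduced around $\InstR_{\D_1}$ cancels because $x\in\bvar(\D)$ is disjoint from $\dom(\rho)$. You are in fact slightly more careful than the paper in explicitly invoking Proposition~\ref{prop:VNF}\eqref{prop:VNF2} to check that $x\notin\bvar(\D_1)$, so that the induction hypothesis applies to $\rho[x\mapsto\eval{s}_{F\updcons\si,\rho}]$ — a detail the paper leaves implicit.
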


\begin{proof}  
  By induction on $\D$.
  We only consider the case for $\InstR_\D$, the case of
  $\InstL_\D$ will be similar.
  The only rule which changes $\rho$ is an application of 
  Substitution.
  In this case, $\D$ will be of the form
  \begin{prooftree}
    \AxiomC{$\cD_1$}
    \noLine
    \UnaryInfC{$t=u$}
    \UnaryInfC{$t[s/x]=u[s/x]$}
  \end{prooftree}
  and $\InstR_\D$ has the form
  \[
    \ISup[t,s/x] \seqadd \InstR_{\D_1}
    \seqadd \ISdwn[u,s/x] 
  \]
  By assumption we obtain $x\notin\dom(\rho)$ as $x\in\bvar(\D)$.
  The i.h.\ shows that the evaluation of $\InstR_{\D_1}$ does not
  change the assignment.
  Evaluating $\ISup[t,s/x]$ changes $\rho$ by mapping $x$ to some
  value,
  while evaluating $\ISdwn[u,s/x]$ removes $x$ from the domain of the
  assignment.
  Hence, the resulting overall assignment will be $\rho$ again.
\end{proof}

\begin{theorem}[\Sot]\label{thm:soundness in S12}
  Let $\D$ be a derivation of $t = u$ in Variable Normal Form.
  Let $\rho$ be an assignment with $\dom(\rho)\subseteq\var(t,u)$,
  and $F$ a model for \Ax.
  Let $\si_1, \si_2$ be given by
  \begin{align*}
    \Seq{F, \si_1, \rho} \quad&=\quad \Phi(\InstR_\D, \Seq{F, \Seq{}, \rho})\\
    \Seq{F, \si_2, \rho} \quad&=\quad \Phi(\InstL_\D, \Seq{F, \Seq{}, \rho})   
  \end{align*}
    Then
    \begin{align*}
        \evalfr{t} \quad&\sqsubseteq\quad \eval{u}_{F\updcons\si_1, \rho}\\
        \evalfr{u} \quad&\sqsubseteq\quad \eval{t}_{F\updcons\si_2, \rho}
    \end{align*}
\end{theorem}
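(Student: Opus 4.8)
The plan is to prove a generalized statement by induction on the structure of the derivation, mirroring the case analysis in the proof of Claim~\ref{claim:soundness in S22}, but now exploiting that the update sequences are \emph{computed explicitly} by $\Phi$ rather than merely asserted to exist. Concretely, I would establish the following for every subderivation $\cD_0\vdash t=u$ of $\cD$, every assignment $\rho$, and every sequence $\si$ of updates based on $F,\kappa,\cD$, where $\kappa=\max\{\G(F),\G(\rho)\}+\lh(\cD)$ and the gauge, extent and length bounds of Lemma~\ref{lem:instruction update measures} are met: writing $\Seq{F,\si',\rho'}=\Phi(\InstR_{\cD_0},\Seq{F,\si,\rho})$, one has
\[
  \eval{t}_{F\updcons\si,\rho}\ \apprx\ \eval{u}_{F\updcons\si',\rho'},
\]
together with the dual statement for $\InstL_{\cD_0}$. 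Whenever $\dom(\rho)$ avoids $\bvar(\cD_0)$, Lemma~\ref{lem:PhiRho} forces $\rho'=\rho$; for the final derivation the hypothesis $\dom(\rho)\subseteq\var(t,u)$ guarantees exactly this, since in Variable Normal Form bound variables do not occur in the final equation. The Theorem is then the instance $\cD_0=\cD$, $\si=\Seq{}$.

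For the induction I would treat the rules exactly as in Section~\ref{sec:SoundnessInS22}. The \textbf{Reflexivity} case is immediate as $\InstR_{\cD_0}$ is empty. In the \textbf{Axiom} case $\Phi$ leaves the state unchanged on $\IAx[t\rightarrow u]$, so the goal $\eval{t}_{F\updcons\si,\rho}\apprx\eval{u}_{F\updcons\si,\rho}$ is precisely the defining property of the $\kappa$-model $F\updcons\si$ (Corollary~\ref{cor:models}). \textbf{Symmetry} swaps the roles of $\InstR$ and $\InstL$ and follows from the dual induction hypothesis, which is why the two directions must be proven simultaneously. \textbf{Transitivity} uses the composition property $\Phi(\tau_1\seqcons\tau_2,\alpha)=\Phi(\tau_2,\Phi(\tau_1,\alpha))$: processing $\InstR_{\cD_1}$ and then $\InstR_{\cD_2}$ chains $\eval{t}\apprx\eval{s}\apprx\eval{u}$ under matching frames, needing only transitivity of $\apprx$. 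For \textbf{Compatibility} and \textbf{Substitution} I would invoke the Substitution Lemma~\ref{lem:substitution} to rewrite $\eval{s[t/x]}$ as $\eval{s}$ under an updated assignment, then combine the induction hypothesis with monotonicity of evaluation in both arguments (using $F\updcons\si\forder F\updcons\si'$, which holds because $\Phi$ only ever appends updates). The domain hypothesis is maintained along the induction, as bound variables are introduced and then removed in matching $\ISup$/$\ISdwn$ pairs, so Lemma~\ref{lem:PhiRho} returns the assignment to its value before each step.

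The decisive difference from Section~\ref{sec:SoundnessInS22}, and the whole point of introducing instructions, is the quantifier complexity of the induction. In Claim~\ref{claim:soundness in S22} the sequences $\si_1,\si_2$ were existentially quantified subject to bounds, making the induction formula $\pib2$ and forcing $\lind{\pib2}$, available only in \Stt. Here $\si_1$ and $\si_2$ are the explicit second components of $\Phi(\InstR_{\cD},\cdot)$ and $\Phi(\InstL_{\cD},\cdot)$; since $\Phi$ is a feasibly computable function whose outputs have gauge, extent and sequence length bounded polynomially in $\lh(\cD)$ by Lemma~\ref{lem:instruction update measures}, the approximation assertion is sharply bounded once this data is supplied. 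The induction on $\lh(\cD_0)$ — which is logarithmic in the code of $\cD$, hence an instance of LIND — then runs on a $\pib1$ formula, so \Sot suffices by Theorem~\ref{thm:Pi-LIND}.

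The main obstacle I anticipate is exactly the bookkeeping that keeps the induction formula within $\pib1$: one must verify that throughout the processing of $\InstR_{\cD}$ and $\InstL_{\cD}$ all gauges stay bounded by $\kappa=\Oh(\lh(\cD))$, so that $\Phi$ is genuinely a feasible function and the approximation remains decidable by a sharply bounded formula. This is precisely what Lemma~\ref{lem:instruction update measures} supplies, so the effort lies in checking that its hypotheses are preserved through each inductive case, rather than in any new semantic insight beyond the Substitution Lemma and monotonicity already used for Theorem~\ref{thm:soundness in S22}.
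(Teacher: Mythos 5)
Your proposal is correct and follows essentially the same route as the paper: it generalizes the theorem to a claim about arbitrary sub-derivations $\cD_0$ with an accumulated update sequence $\si$ (the paper's Claim~\ref{claim:soundness in S12}), proves it by LIND on $\lh(\cD_0)$ reusing the case analysis of Claim~\ref{claim:soundness in S22}, and identifies the decisive point — that $\Phi$ makes $\si_1,\si_2$ explicit so the induction formula drops from $\pib2$ to $\pib1$ — together with the supporting roles of Lemmas~\ref{lem:PhiRho} and~\ref{lem:instruction update measures}. Indeed, your write-up spells out more of the case analysis (e.g.\ the use of the composition property of $\Phi$ for Transitivity) than the paper's own one-paragraph proof does.
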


Instead of proving the theorem directly, we prove the following stronger claim.

\begin{claim}[\Sot]\label{claim:soundness in S12}
  Fix some derivation $\cD$ in Variable Normal Form, 
  some model $F$ for~\Ax,
  and some integer $U$ such that $\G(F)+\lh(\cD)\le U$.
  Let 
  $\kappa=\G(F)$, and $X=\var(\D)$.

  Let $\D_0 \vdash t = u$ be a sub-derivation of $\D$.
  Let $\rho$ be an assignment,
  and $\si$ a sequence of updates based on $F,U$ and $\D$
  such that
  \begin{align*}
    \dom(\rho)\ &\subseteq\ X\setminus\bvar(\D_0) \\
    \G(\rho), \E(\si), \G(\si), \seqlh(\si) \ &\le\ 
    U - \lh(\D_0)
  \end{align*}
  Let $\si_1, \si_2$ be given by
  \begin{align*}
    \Seq{F, \si_1, \rho} \quad&=\quad \Phi(\InstR_{\D_0}, \Seq{F, \si, \rho})\\
    \Seq{F, \si_2, \rho} \quad&=\quad \Phi(\InstL_{\D_0}, \Seq{F, \si, \rho})   
  \end{align*}
  Then
  \begin{align*}
    \evalfr{t} \quad&\sqsubseteq\quad \eval{u}_{F\updcons\si_1, \rho}\\
    \evalfr{u} \quad&\sqsubseteq\quad \eval{t}_{F\updcons\si_2, \rho}
                      \enspace .
  \end{align*}
\end{claim}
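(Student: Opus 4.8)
The plan is to mirror the case analysis in the proof of Claim~\ref{claim:soundness in S22}, but to exploit the instruction machinery of Section~\ref{sec:instructions} so that $\si_1$ and $\si_2$ are now \emph{computed} by $\Phi$ rather than merely asserted to exist. This is the decisive gain: in the \Stt\ argument the inductive statement carries an (essentially bounded) existential quantifier over $\si_1,\si_2$ underneath a universal quantifier over $\D_0,\rho,\si$, which makes it $\pib2$; here $\si_1=\Phi(\InstR_{\D_0},\Seq{F,\si,\rho})$ and $\si_2=\Phi(\InstL_{\D_0},\Seq{F,\si,\rho})$ are explicit functions of the data, so the matrix collapses to a $\sib0$ predicate and the whole inductive statement becomes $\pib1$. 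Logarithmic induction for such formulas is available in \Sot\ by Theorem~\ref{thm:Pi-LIND}, and I would run $\lind{\pib1}$ on $\lh(\D_0)$.

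Before the case analysis I would record two facts uniform across all cases. First, since $\dom(\rho)\subseteq X\setminus\bvar(\D_0)$, Lemma~\ref{lem:PhiRho} guarantees that the assignment component returned by $\Phi$ is again $\rho$, so the triples really have the form $\Seq{F,\si_i,\rho}$ claimed. Second, from the budget hypotheses $\G(\rho),\G(\si),\E(\si),\seqlh(\si)\le U-\lh(\D_0)$ together with $\lh(\InstR_{\D_0}),\lh(\InstL_{\D_0})\le\lh(\D_0)$ and $\G(F)+\lh(\D)\le U$, Lemma~\ref{lem:instruction update measures} bounds $\G(\si_i),\E(\si_i),\seqlh(\si_i)$ and, crucially, keeps every update within gauge $U$; by Corollary~\ref{cor:models} this makes $F\updcons\si_i$ a $U$-model of $\D$, which is exactly what licenses the model property and keeps all evaluations bounded by Lemma~\ref{lem:valuebound}, confirming that the conclusion is a genuine $\sib0$ predicate of the input.

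For the case analysis on the last rule of $\D_0$: Reflexivity is immediate, since $\InstR_{\D_0}=\InstL_{\D_0}=\Seq{}$ leaves the state unchanged and $\sqsubseteq$ is reflexive after monotonicity in the frame. For the Axiom rule, the forward instruction $\IAx[t\rightarrow u]$ leaves the state unchanged, so $\evalfr{t}\sqsubseteq\eval{u}_{F\updcons\si,\rho}$ follows from the $U$-model property of $F\updcons\si$ plus monotonicity, while the backward instruction $\IAx[t\leftarrow u]$ appends the update $\Psi(t\leftarrow u,\cdot)$, and Lemma~\ref{lem:Psi} delivers precisely $\eval{u}\sqsubseteq\eval{t}$ under the extended frame. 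Symmetry swaps the two directions, so I would take the sequences from the sub-derivation and exchange them. For Transitivity I would use the composition property $\Phi(\tau_1\seqcons\tau_2,\cdot)=\Phi(\tau_2,\Phi(\tau_1,\cdot))$ to split $\InstR_{\D_0}=\InstR_{\D_1}\seqcons\InstR_{\D_2}$, apply the induction hypothesis to $\D_1$ with input $\si$ and then to $\D_2$ with the \emph{threaded} intermediate sequence produced by the first call, and chain the two approximations by transitivity of $\sqsubseteq$; the decreasing budget survives because $\lh(\D_0)>\lh(\D_1)+\lh(\D_2)$. Finally, Compatibility and Substitution are handled by the Substitution Lemma~\ref{lem:substitution} and monotonicity, with $\ISup[t,s/x]$ and $\ISdwn[u,s/x]$ installing and then retracting the bound variable $x$ in the assignment; here Variable Normal Form is essential, since it ensures $x\notin\bvar(\D_1)$, so the extended assignment $\rho[x\mapsto\eval{s}_{F\updcons\si,\rho}]$ still meets the domain hypothesis when the induction hypothesis is applied to $\D_1$.

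The main obstacle I anticipate is the bookkeeping that certifies $\pib1$ complexity and preserves the gauge budget at the same time. Concretely, one must check that $\Phi$ and term evaluation are provably total polynomial-time functions whose graphs are $\sib0$ within the stated bounds, so that no hidden unbounded quantifier survives in the induction formula; and one must thread the inequality $U-\lh(\D_0)$ correctly through every case, most delicately in the Substitution case, where $\rho$ grows to $\rho[x\mapsto\eval{s}_{F\updcons\si,\rho}]$ and one must verify via Lemma~\ref{lem:valuebound} that the enlarged gauge still leaves the budget $U-\lh(\D_1)$ needed for the recursive call. Lemma~\ref{lem:instruction update measures} does most of this work, but aligning its hypotheses with the shrinking budget at each node, and confirming that the resulting $F\updcons\si_i$ remain $U$-models throughout, is the part that requires real care.
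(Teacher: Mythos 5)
Your proposal is correct and follows essentially the same route as the paper: the paper likewise runs $\lind{\pib1}$ on $\lh(\cD_0)$ and simply notes that each case of the last rule of $\cD_0$ is handled as in the proof of Claim~\ref{claim:soundness in S22}, with $\si_1$ and $\si_2$ now computed by $\Phi$ from $\InstR_{\D_0}$ and $\InstL_{\D_0}$ rather than chosen existentially. Your additional bookkeeping (Lemma~\ref{lem:PhiRho} for the assignment component, Lemma~\ref{lem:instruction update measures} and Corollary~\ref{cor:models} for the budget and the $U$-model property, and the composition property of $\Phi$ in the Transitivity case) is exactly the detail the paper leaves implicit.
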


Theorem~\ref{thm:soundness in S12} follows from Claim~\ref{claim:soundness in S12} by letting $\cD_0=\cD$,
$\rho$ as given,
$\si=\Seq{}$,
and $U=\max\{\G(F),\G(\rho)\}+\lh(\cD)$,

\begin{proof}[Proof of Claim~\ref{claim:soundness in S12}]
  We argue in \Sot.
  Let $\cD$, $F$, 
  $\kappa$, and $X$ be given as in the Claim. 
  We prove that for any $\cD_0$, $\rho$, $\si$, $\si_1$ and $\si_2$
  satisfying the conditions of the Claim,
  the assertion of the Claim holds, by induction on $\lh(\cD_0)$.
  Thus this is proven by logarithmic induction (LIND) on a $\pib1$-property,
  which is available in \Sot by Theorem~\ref{thm:Pi-LIND}.

  We consider cases according to the last rule applied
  in~$\cD_0$.
  The details for each case follow the same lines as in the proof of
  Claim~\ref{claim:soundness in S22}, except that now $\si_1$ and
  $\si_2$ are not chosen but given by the $\Phi$-function applied to
  sequences of instances that are extracted from derivations.
\end{proof}


\begin{corollary}\label{cor:Sot}
  The consistency of $\PETS(\Ax)$ is provable in $\Sot$.
\end{corollary}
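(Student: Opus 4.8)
The plan is to mirror the proof of Corollary~\ref{cor:Stt}, replacing the appeal to Theorem~\ref{thm:soundness in S22} by our second main soundness result, Theorem~\ref{thm:soundness in S12}, which is provable in \Sot. Arguing inside \Sot, I would assume towards a contradiction that $\D$ is a $\PETS(\Ax)$ derivation ending in $0=1$, where $0\equiv\suc_0(\eps)$ and $1\equiv\suc_1(\eps)$. By Proposition~\ref{prop:VNF} I may pass to a derivation $\D'$ in Variable Normal Form ending in the same equation $0=1$, at the cost of at most squaring the length; since this keeps $\D'$ polynomially bounded in $\D$, the object $\D'$ is available inside \Sot.

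Next I would supply the data required by Theorem~\ref{thm:soundness in S12}. Take $\rho$ to be the empty assignment; as $\var(0,1)=\emptyset$ the condition $\dom(\rho)\subseteq\var(0,1)$ holds trivially. Take $F$ to be the empty frame. One checks that $F$ is a model of \Ax in the full (unbounded) sense exactly as in Lemma~\ref{lem:existence model}: for any axiom $t=u$ in \Ax the left-hand side $t$ has the form $f(\vt)$ with $f\in\cF\setminus\cB$, so $F(f)=\bot$ and hence $\evalfr{t}=*\sqsubseteq\evalfr{u}$. Forming the instruction sequence $\InstR_{\D'}$ from $\D'$ and setting $\Seq{F,\si_1,\rho}=\Phi(\InstR_{\D'},\Seq{F,\Seq{},\rho})$, Theorem~\ref{thm:soundness in S12} then yields $\evalfr{0}\sqsubseteq\eval{1}_{F\updcons\si_1,\rho}$.

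It then remains to evaluate both sides and read off a contradiction. Since $0\equiv\suc_0(\eps)$ and $1\equiv\suc_1(\eps)$ contain no function symbols outside $\cB$, their evaluations are independent of the frame and the assignment: $\evalfr{0}=\suc_0(\evalfr{\eps})=\eps0$ and $\eval{1}_{F\updcons\si_1,\rho}=\suc_1(\eps)=\eps1$. The soundness conclusion therefore asserts $\eps0\sqsubseteq\eps1$. But the inductive definition of $\sqsubseteq$ relates a value of the form $v_10$ only to values of the form $v_20$, and $\eps0$ is neither $*$ nor $\eps$, so any value approximated by $\eps0$ must itself end in the digit $0$; as $\eps1$ ends in $1$ we conclude $\eps0\not\sqsubseteq\eps1$. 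This contradiction shows that no $\PETS(\Ax)$ derivation of $0=1$ exists, which is precisely $\Cons(\PETS(\Ax))$.

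The argument is short once Theorem~\ref{thm:soundness in S12} is in hand, so I do not expect a genuine obstacle in the corollary itself; the only points needing care are that every meta-level step is \Sot-formalizable. Concretely, I would observe that passing to Variable Normal Form, extracting $\InstR_{\D'}$ (whose length is at most $\lh(\D')$ by the remark following Definition~\ref{def:sequences of instructions}), and computing the $\Phi$-function are all explicit polynomial constructions on the G\"odelization of $\D$, hence available in \Sot. The substantive inductive content has already been discharged inside Theorem~\ref{thm:soundness in S12}, so the real difficulty lies in that soundness theorem rather than in this corollary.
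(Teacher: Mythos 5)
Your proposal is correct and follows essentially the same route as the paper's own proof: pass to Variable Normal Form via Proposition~\ref{prop:VNF}, take the empty assignment and empty frame, extract $\InstR_\D$, apply $\Phi$ and Theorem~\ref{thm:soundness in S12}, and derive the impossible approximation $0\apprx 1$. The extra details you supply (that the empty frame is a model, the explicit evaluation of $0$ and $1$, and why $\eps0\not\sqsubseteq\eps1$) are all consistent with what the paper leaves implicit.
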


\begin{proof}
  We argue in \Sot.
  Assume $\D$ is a $\PETS(\Ax)$ derivation ending in $0=1$.
  Using Proposition~\ref{prop:VNF} we can assume that $\D$ is in
  Variable Normal Form.
  Let $\rho$ be the empty assignment, and $F$ the empty model
  for~$\Ax$.
  Let $\si_1$ be given by
  \[
    \Seq{F, \si_1, \rho} \quad=\quad \Phi(\InstR_\D, \Seq{F, \Seq{}, \rho})
  \]
  By the previous Theorem~\ref{thm:soundness in S12}, we obtain
  \[
    0 \quad=\quad \evalfr{0}
    \quad\apprx\quad  \eval{1}_{F\updcons \sigma_1, \rho}
    \quad=\quad 1
  \]
  which is impossible.
\end{proof}

\bibliography{ref}

\end{document}